\newtheorem{de}{Definition}[section]
\newtheorem{theo}[de]{Theorem}
\newtheorem{prop}[de]{Proposition}
\newtheorem{lem}[de]{Lemma}
\begin{document}

\title{Non-split almost complex and \\ non-split Riemannian supermanifolds}
\author{
{\bf Matthias Kalus}\\
{\small  Fakult\"at f\"ur Mathematik}\\{\small Ruhr-Universit\"at Bochum}\\ {\small D-44780 Bochum, Germany}\\
}
\date{}
\maketitle
\centerline{ {\bf Keywords: } supermanifold; almost complex structure; Riemannian metric; non-split}
\centerline{  {\bf MSC2010:}  32Q60, 53C20, 58A50}
\renewcommand{\thefootnote}{\arabic{footnote}}

\begin{abstract}\noindent 
Non-split almost complex supermanifolds and non-split Riemannian supermanifolds are studied. The first obstacle for a  splitting is parametrized by group orbits on an infinite dimensional vector space. Further it is shown that non-split structures appear in the first case as deformations of a split reduction and in the second case as the deformation of an  underlying metric. 
In contrast to non-split deformations of complex supermanifolds, these deformations can be restricted by cut-off functions to local deformations. A class of examples of nowhere split structures constructed from almost complex manifolds of dimension $6$ and higher, is provided for both cases.
\end{abstract}

Even almost complex structures and Riemannian metrics define global tensor fields on real supermanifolds. Denoting a real supermanifold by $\mathcal M=(M,\mathcal C_\mathcal M^\infty)$ and the global super vector fields on $\mathcal M$ by $\mathcal V_\mathcal M$, the tensors lie in $End(\mathcal V_\mathcal M)_{\bar 0}$, resp. $Hom(\mathcal V_\mathcal M,\mathcal V_\mathcal M^\ast)_{\bar 0}$. Fixing a Batchelor model $\mathcal M \to (M,\Gamma_{\Lambda E^\ast}^\infty)$, the $\mathbb Z$-degree zero part $J_R$ of an even  almost complex structure $J  \in End(\mathcal V_\mathcal M)_{\bar 0}$ is again an almost complex structure on $\mathcal M$. This raises the question, whether there is a Batchelor model, such that $J$ equals its reduction $J_R$. Or equivalently, denoting by $\mathcal N$ the nilpotent superfunctions: if the exact sequence $$0 \to \mathcal N^2 \to \mathcal C^\infty_\mathcal M \to \mathcal C^\infty_M\oplus \Gamma_{E^\ast}^\infty\to 0$$ is split with  an $\alpha:\mathcal C^\infty_M\oplus \Gamma_{E^\ast}^\infty \to  \mathcal C^\infty_\mathcal M$ such that the induced automorphism $\hat \alpha:\mathcal C_\mathcal M^\infty \to \mathcal C_\mathcal M^\infty$ transforms $J_R$ to $J$.  In the case of a positive answer we call the tensor split. The analogue question can be formulated for even Riemannian metrics where the reduction $g_R$ of a metric $g \in Hom(\mathcal V_\mathcal M,\mathcal V_\mathcal M^\ast)_{\bar 0}$ is given by the Riemannian metric $g_0+g_2$.

\bigskip\noindent
For complex structures being integrable almost complex structures, existence of a splitting was studied in \cite{Gr}, \cite{Ro1} and \cite{Va}: there exist non-split complex supermanifolds, all of them being deformations of split complex supermanifolds. The parameter spaces of deformations are given by orbits of the automorphism group of the associated Batchelor bundle on a certain non-abelian first cohomology. Here the existence of local complex coordinates makes the splitting problem a problem of global cohomology. The splitting question for even symplectic supermanifolds was answered in \cite{Ro2} by identifying the symplectic supermanifold with an underlying symplectic manifold and a Batchelor bundle with metric and connection. It is shown that all terms of degree higher than 2 in a symplectic form can be erased by the  choice of a Batchelor model. Hence all symplectic supermanifolds are split in the above sense.

\bigskip\noindent
In this paper the existence of a splitting for even almost complex structures as well as even Riemannian metrics is studied. It is shown that all almost complex structures appear as deformations of split structures and all Riemannian metrics appear as deformations of underlying metrics. In both cases but in contrast to the complex case, these  deformations can be restricted by smooth cut-off functions to local deformations. For almost complex structures the splitting problem stated above can be expressed as: what is the obstacle for having local coordinates near any point such that the almost complex structure is represented by a purely numerical matrix. In the Riemannian case (similar to the symplectic case in \cite{Ro2}), the reduction is asked to be a purely numerical matrix on $\mathcal V_{\mathcal M,-1}^{\otimes 2}$ and to have matrix entries of degree less or equal to $2$ on the three remaining blocks of $(\mathcal V_{\mathcal M,0}\oplus \mathcal V_{\mathcal M,-1})^{\otimes 2}$. The first obstacle for a splitting is described for both problems. Finally explicit examples of non-split almost complex structures, resp. Riemannian metrics are given. The results and the applied methods are summarized in the following.

\bigskip\noindent
\textit{Contents.} In the first section an almost complex structure is decomposed via the finite log series into its reduction (the degree zero term) and its degree increasing term. With respect to these components the lowest degree obstacle for isomorphy of almost complex supermanifolds is deduced. Fixing the reduction, these obstacles are parametrized by the orbits of a quotient of the group of transformations that are almost holomorphic with respect to the reduction up to a certain degree, acting on a quotient of tensor spaces. 

\bigskip\noindent
The second section deals with Riemannian metrics in an analogue way producing  results analogous to those in the almost complex case. Here the isometries of the reduction play the role of the almost holomorphic transformations. However the more complicated action of the automorphism group of the supermanifold on a metric and the fact that the reduction has no pure degree, require an adjustment of the techniques. 

\bigskip\noindent
Finally the third section contains a class of non-split examples for almost complex structures and Riemannian metrics. These are constructed on the supermanifold of differential forms on an arbitrary almost complex manifold of dimension higher than $4$. Some basic facts of almost complex geometry and a method to construct super vector fields are applied. In the almost complex and in the Riemannian case, the constructed non-split tensors are nowhere split, i.e. at no point of the manifold the matrix elements of the respective tensors satisfy the split property mentioned above.

\section{Non-split almost complex supermanifolds}
\noindent
Let $(\mathcal M,J)$ be an almost complex supermanifold with sheaf of superfunctions $\mathcal C^\infty_\mathcal M$. Denote the $\mathcal C^\infty_\mathcal M(M)$-module of global superderivations of $\mathcal C^\infty_\mathcal M$ by $\mathcal V_{\mathcal M}=\mathcal V_{\mathcal M,\bar 0}\oplus \mathcal V_{\mathcal M,\bar 1}$. Furthermore fix a Batchelor model $\mathcal M \to (M,\Gamma_{\Lambda E^\ast}^\infty)$ yielding $\mathbb Z$-gradings (denoted by lower indexes) and filtrations (denoted by upper indexes in brackets) on $\mathcal C_\mathcal M^\infty$, $\mathcal V_{\mathcal M}$ and $End(\mathcal V_{\mathcal M})$, the last denoting $\mathcal C^\infty_\mathcal M(M)$-linear maps. The even automorphism of $\mathcal C_\mathcal M^\infty(M)$-modules $J \in End(\mathcal V_{\mathcal M})_{\bar 0}$ can be uniquely decomposed into $J=J_R(Id+J_N)$ with invertible $J_R=J_0$ and nilpotent $J_N$. The finite exp and log series yield a unique representation $Id+J_N=\exp(Y)$ with $Y \in End^{(2)}(\mathcal V_{\mathcal M})_{\bar 0}$. 

\begin{lem}\label{le1}
 The tensor $J$ is an almost complex structure if and only if $J_R$ is an almost complex structure and $Y J_R+J_R Y=0$.
\end{lem}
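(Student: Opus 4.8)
The plan is to read the almost complex condition $J^2=-\mathrm{Id}$ off the decomposition $J=J_R\exp(Y)$ and split it into its degree-zero content and a purely nilpotent equation. I take an almost complex structure to mean $J^2=-\mathrm{Id}$. First I would record the conjugation identity $\exp(Y)\,J_R = J_R\exp(J_R^{-1}YJ_R)$, valid because $J_R$ is an invertible degree-zero endomorphism and $\exp$ is a finite polynomial in the nilpotent $Y$. Writing $\tilde Y:=J_R^{-1}YJ_R$, this gives the factorization
\[
J^2 = J_R\exp(Y)\,J_R\exp(Y) = J_R^2\,\exp(\tilde Y)\exp(Y).
\]
Since conjugation by the degree-preserving invertible map $J_R$ sends $\mathrm{End}^{(2)}(\mathcal V_{\mathcal M})_{\bar 0}$ into itself, $\tilde Y$ again lies in $\mathrm{End}^{(2)}$ and is nilpotent.

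Next I would extract the degree-zero part. Because $Y$ and $\tilde Y$ raise the filtration degree by at least two, one has $\exp(\tilde Y)\exp(Y)=\mathrm{Id}+(\text{terms of positive degree})$, so the degree-zero component of $J^2$ is exactly $J_R^2$. Hence $J^2=-\mathrm{Id}$ forces $J_R^2=-\mathrm{Id}$, i.e.\ $J_R$ is an almost complex structure; this is the degree-zero half of the equivalence, and it also yields $J_R^{-1}=-J_R$ for use below.

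With $J_R^2=-\mathrm{Id}$ in hand, the remaining content of $J^2=-\mathrm{Id}$ is $\exp(\tilde Y)\exp(Y)=\mathrm{Id}$, equivalently $\exp(\tilde Y)=\exp(-Y)$. Here I would invoke the bijectivity of the finite $\exp/\log$ correspondence on $\mathrm{End}^{(2)}(\mathcal V_{\mathcal M})_{\bar 0}$ (the very fact used to define $Y$) to conclude $\tilde Y=-Y$, that is $J_R^{-1}YJ_R=-Y$. Multiplying on the left by $J_R$ and using $J_R^{-1}=-J_R$ turns this into the linear relation $YJ_R+J_RY=0$. Running the argument backwards gives the converse: if $J_R^2=-\mathrm{Id}$ and $YJ_R+J_RY=0$, then $\tilde Y=J_R^{-1}(YJ_R)=J_R^{-1}(-J_RY)=-Y$, whence $\exp(\tilde Y)\exp(Y)=\exp(-Y)\exp(Y)=\mathrm{Id}$ and $J^2=J_R^2=-\mathrm{Id}$.

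I expect the only genuinely delicate point to be the reduction of the nilpotent equation $\exp(\tilde Y)=\exp(-Y)$ to the linear relation $\tilde Y=-Y$: this rests on the injectivity of $\exp$ on nilpotent even endomorphisms of filtration degree $\ge 2$, guaranteed because the odd dimension $\mathrm{rk}(E)$ is finite, so $Y$ is nilpotent and the series terminate. Everything else is bookkeeping with the $\mathbb Z$-grading once the conjugation identity is in place.
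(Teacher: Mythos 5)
Your proof is correct, and it takes a genuinely different route from the paper's. Both arguments extract $J_R^2=-\mathrm{Id}$ from the degree-zero part of $J^2=-\mathrm{Id}$, but the paper then works with the resulting relation $\exp(Y)J_R\exp(Y)=J_R$ and proves $Y_{2n}J_R+J_RY_{2n}=0$ by induction on the degree: $Y_2J_R+J_RY_2=0$ holds for degree reasons, and the inductive step uses the anticommutation already established for $Y_2,\dots,Y_{2n-2}$ to pass $J_R$ through $\exp(Y_{[2n-2]})$, so that $Y_{2n}J_R+J_RY_{2n}$ is the only surviving term in degree $2n$. You eliminate the induction altogether: the conjugation identity $\exp(Y)J_R=J_R\exp(\tilde Y)$ with $\tilde Y:=J_R^{-1}YJ_R$ rewrites the nilpotent content of $J^2=-\mathrm{Id}$ as $\exp(\tilde Y)=\exp(-Y)$, and injectivity of the finite $\exp/\log$ correspondence on nilpotent even endomorphisms --- the very fact the paper invokes to define $Y$ in the first place --- collapses this to the linear relation $\tilde Y=-Y$, which is exactly $YJ_R+J_RY=0$. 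Your route is shorter and isolates a structural point: the anticommutation relation says precisely that conjugation by $J_R$ negates $Y$, so the full nilpotent equation holds if and only if it holds at the level of logarithms. The paper's degree-by-degree truncation, while more laborious here, is the same filtration bookkeeping it reuses in Proposition \ref{lem2} and for the split obstruction classes, where no such closed-form shortcut is available. Both proofs dispatch the converse by the same direct computation.
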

\begin{proof}
 From  $J^2=-Id$ we obtain $J_R^2=-Id$ and $\exp(Y) J_R \exp(Y)=J_R$. For reasons of degree $Y_2 J_R+J_R Y_2=0$. Assume that $Y_{2k} J_R+J_R Y_{2k}=0$ holds for all $k< n$. Set $Y_{[2k]}:=\sum_{j=1}^kY_{2j}$. It is $\exp(Y)=\exp(Y_{[2n-2]})+{Y_{2n}}$ up to terms of degree $>2n$. Hence $\exp(Y) J_R \exp(Y)=\exp(Y_{[2n-2]})\exp(-Y_{[2n-2]})J_R+{Y_{2n} J_R+J_RY_{2n}}$ up to terms of degree  $>2n$. This completes the induction. The converse implication follows directly.
\end{proof}

We call $J_R$ the \textit{reduction} of $J$, deforming $J_R$ by $t\mapsto J_R\exp(tY)$. In particular $J_R$ yields an almost complex structure on $M$ and an almost complex structure on the vector bundle $E \to M$. Hence even and odd dimension of $\mathcal M$ are even. Further topological conditions on  $M$ and $E$ for the existence of an almost complex structure can be obtained from \cite{Mas} and e.g.~\cite{TE}. Adapted to our considerations the almost complex supermanifold $(\mathcal M,J)$ is {split} if there is a  Batchelor model, such that the almost complex structure $J$ has nilpotent component $Y=0$. Note that this problem is completely local since Lemma \ref{le1} allows cutting off the nilpotent $Y$ in $J=J_R\exp(Y)$.

\bigskip\noindent
Let $\Phi=(\varphi,\varphi^\ast)$ be an automorphism of the supermanifold $\mathcal M$. The global even isomorphism of superalgebras $\varphi^\ast \in Aut(\mathcal C_\mathcal M^\infty(M))_{\bar 0}$ over $\varphi$ is  decomposable into $\varphi^\ast=\exp(\zeta)\varphi_0^\ast $ with $\zeta \in \mathcal V_{\mathcal M,\bar 0}^{(2)}$ and  $\varphi_0^\ast$ preserving the $\mathbb Z$-degree induced by the Batchelor model (see e.g.~\cite{Ro1}). Denote by $Aut(E^\ast)$ the bundle automorphisms  over arbitrary diffeomorphisms of $M$, then $\varphi_0^\ast$ is induced by an element  $\varphi_0 \in Aut(E^\ast)$ over $\varphi$. 
The automorphism $\varphi^\ast$ transforms  $J$ into $\varphi^\ast.J$ given by  $(\varphi^\ast.J)(\chi):=\varphi^\ast(J((\varphi^\ast)^{-1}\chi \varphi^\ast))(\varphi^\ast)^{-1}$. 
Denoting $ad(\zeta):=[\zeta,\cdot]$, assuming $\zeta \in \mathcal V_{\mathcal M,\bar 0}^{(2k)}$ and applying $\varphi^\ast=(Id+\zeta)\varphi_0^\ast$ up to terms in $\mathcal V_{\mathcal M,\bar 0}^{(4k)}$, it is  
$\varphi^\ast.J=\varphi_0^\ast.J+[ad(\zeta),\varphi_0^\ast.J]$ up to terms in $End^{(4k)}(\mathcal V_{\mathcal M})_{\bar 0}$. 
Comparing both sides with respect to the degree yields:
 
\begin{prop}\label{lem2}
 The almost complex supermanifolds  $(\mathcal M,J)$ and $(\mathcal M,J^\prime)$ with structures $J=J_R \exp(Y)$, $J^\prime=J_R^\prime \exp(Y^\prime) $, $Y,Y^\prime \in  End^{(2k)}(\mathcal V_{\mathcal M})_{\bar 0}$ are isomorphic up to error terms in $End^{(4k)}(\mathcal V_{\mathcal M})_{\bar 0}$ via an automorphism $\varphi^\ast$ with $\varphi^\ast(\varphi_0^\ast)^{-1} \in \exp(\mathcal V_{\mathcal M,\bar 0}^{(2k)})$ if and only if there exist $\varphi_0 \in Aut(E^\ast)$ and  $\zeta \in  \mathcal V_{\mathcal M,\bar 0}^{(2k)}$ such that $J_R^\prime=\varphi_0^\ast.J_R$ and: 
 \begin{align*}
  Y_{2j}^\prime=\varphi_0^\ast.Y_{2j}-ad(\zeta_{2j})-J_R^\prime ad(\zeta_{2j})J_R^\prime, \quad k\leq j<2k
 \end{align*}
\end{prop}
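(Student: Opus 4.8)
The essential computation has already been assembled in the paragraph preceding the statement: for an automorphism with $\varphi^\ast=(Id+\zeta)\varphi_0^\ast$ and $\zeta \in \mathcal V_{\mathcal M,\bar 0}^{(2k)}$ one has $\varphi^\ast.J=\varphi_0^\ast.J+[ad(\zeta),\varphi_0^\ast.J]$ up to terms in $End^{(4k)}(\mathcal V_\mathcal M)_{\bar 0}$. My plan is to substitute the decompositions $J=J_R\exp(Y)$ and $J^\prime=J_R^\prime\exp(Y^\prime)$ into this identity and read off the conditions degree by degree. Since $Y \in End^{(2k)}(\mathcal V_\mathcal M)_{\bar 0}$ we have $Y^2 \in End^{(4k)}(\mathcal V_\mathcal M)_{\bar 0}$, so that $J=J_R+J_RY$ and $J^\prime=J_R^\prime+J_R^\prime Y^\prime$ modulo $End^{(4k)}(\mathcal V_\mathcal M)_{\bar 0}$. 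As $\varphi_0^\ast$ preserves the $\mathbb Z$-degree, applying it yields $\varphi_0^\ast.J=\varphi_0^\ast.J_R+(\varphi_0^\ast.J_R)(\varphi_0^\ast.Y)$ to the same order.

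First I would isolate the degree-zero part. In $[ad(\zeta),\varphi_0^\ast.J]$ the operator $ad(\zeta)$ raises the $\mathbb Z$-degree by at least $2k$, while $\varphi_0^\ast.J-\varphi_0^\ast.J_R$ already has degree at least $2k$; hence the commutator reduces to $[ad(\zeta),\varphi_0^\ast.J_R]$ modulo $End^{(4k)}(\mathcal V_\mathcal M)_{\bar 0}$, and its lowest degree is $2k$. Consequently the degree-zero term of $\varphi^\ast.J$ is exactly $\varphi_0^\ast.J_R$, the reduction of the transformed structure. Comparing it with the degree-zero part $J_R^\prime$ of $J^\prime$ forces $J_R^\prime=\varphi_0^\ast.J_R$; since $\varphi_0^\ast$ is precisely the automorphism induced by some $\varphi_0 \in Aut(E^\ast)$, this supplies the first asserted condition.

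Next I would compare the components of degree $2j$ for $k\leq j<2k$, i.e.\ the part living in $End^{(2k)}(\mathcal V_\mathcal M)_{\bar 0}$ modulo $End^{(4k)}(\mathcal V_\mathcal M)_{\bar 0}$. Using $\varphi_0^\ast.J_R=J_R^\prime$ this amounts to $J_R^\prime Y^\prime=J_R^\prime(\varphi_0^\ast.Y)+[ad(\zeta),J_R^\prime]$ modulo $End^{(4k)}(\mathcal V_\mathcal M)_{\bar 0}$. Multiplying on the left by $J_R^{\prime\,-1}=-J_R^\prime$ and using $J_R^{\prime\,2}=-Id$ in the expansion $[ad(\zeta),J_R^\prime]=ad(\zeta)J_R^\prime-J_R^\prime ad(\zeta)$ transforms this into $Y^\prime=\varphi_0^\ast.Y-ad(\zeta)-J_R^\prime ad(\zeta)J_R^\prime$. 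Since $ad(\zeta_{2j})$ and $J_R^\prime ad(\zeta_{2j})J_R^\prime$ both have pure degree $2j$, extracting the degree-$2j$ component gives exactly the stated relation for each $k\leq j<2k$.

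For the equivalence both implications run along the same identity. The forward direction is the reading-off just described, where an isomorphism $\varphi^\ast$ with $\varphi^\ast(\varphi_0^\ast)^{-1}\in\exp(\mathcal V_{\mathcal M,\bar 0}^{(2k)})$ produces $\varphi_0$ and $\zeta$; conversely, given $\varphi_0$ and $\zeta$ satisfying the two conditions, the automorphism $\varphi^\ast:=\exp(\zeta)\varphi_0^\ast$ satisfies $\varphi^\ast.J=J^\prime$ up to $End^{(4k)}(\mathcal V_\mathcal M)_{\bar 0}$ by the same computation run backwards. The only point requiring care is the degree bookkeeping: one must verify that every discarded term — the $Y^2$ contribution, the cross term $[ad(\zeta),\varphi_0^\ast.J-\varphi_0^\ast.J_R]$, and the higher commutators hidden in $\exp(\zeta)$ — genuinely lands in $End^{(4k)}(\mathcal V_\mathcal M)_{\bar 0}$. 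This is exactly where the hypothesis $\zeta\in\mathcal V_{\mathcal M,\bar 0}^{(2k)}$, rather than merely $\mathcal V_{\mathcal M,\bar 0}^{(2)}$, is indispensable, and it is the main, if routine, obstacle to make rigorous.
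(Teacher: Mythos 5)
Your proposal is correct and follows essentially the same route as the paper: the paper's entire proof consists of the identity $\varphi^\ast.J=\varphi_0^\ast.J+[ad(\zeta),\varphi_0^\ast.J]$ modulo $End^{(4k)}(\mathcal V_{\mathcal M})_{\bar 0}$ derived in the preceding paragraph, followed by the instruction to compare degrees, which is exactly the computation you carry out (isolating $J_R^\prime=\varphi_0^\ast.J_R$ in degree zero, multiplying by $-J_R^\prime$, and extracting the degree-$2j$ components). Your explicit verification of the discarded terms and of the reversibility of the computation for the converse direction simply fills in the details the paper leaves implicit.
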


From now on we fix the reduction $J_R$ and hence assume that for an automorphism $\psi^\ast$ of $\mathcal M$, the map  $\psi_0^\ast$ is  pseudo-holomorphic with respect to $J_R$, denoted $\psi_0^\ast\in Hol(\mathcal M,J_R)$. Let  $Hol(\mathcal M,J_R,2k)$ be the automorphisms $\psi^\ast=\exp(\xi)\psi^\ast_0$ of $\mathcal M$ such that  $J_R=\psi^\ast.J_R$ up to terms in $End^{(2k)}(\mathcal V_\mathcal M)_{\bar 0}$. Note that $\psi^\ast\in Hol(\mathcal M,J_R,2k)$ includes  $\psi_0^\ast \in Hol(\mathcal M,J_R)$ and that $\exp(\mathcal V_{\mathcal M,\bar 0}^{(2k)})\subset Hol(\mathcal M,J_R,2k)$ is a normal subgroup.

\bigskip\noindent
Define on the endomorphisms of real vector spaces $End_{\mathbb R}(\mathcal V_{\mathcal M})$ the $\mathcal C^\infty_\mathcal M(M)$-linear $\mathbb Z$-degree preserving map: $$F_{J_R}:End_{\mathbb R}(\mathcal V_{\mathcal M})\to  End_{\mathbb R}(\mathcal V_{\mathcal M}), \quad F_{J_R}(\gamma):=\gamma+J_R \gamma J_R$$
The set $F_{J_R}(End^{(2k)}(\mathcal V_{\mathcal M}))$  is by Lemma \ref{le1} exactly the nilpotent parts $Y$ of almost complex structures $J=J_R\exp(Y)$ deforming $J_R$ in degree $2k$ and higher. Note further that $F_{J_R} (ad(\mathcal V_{\mathcal M})) \subset End(\mathcal V_{\mathcal M})$ and more precisely $F_{J_R} (ad(\mathcal V_{\mathcal M,\bar 0}^{(2k)})) \subset End^{(2k)}(\mathcal V_{\mathcal M})_{\bar 0}$. 

\begin{de} Let the upper index $2k\in 2\mathbb N$ in curly brackets denote the sum of terms of $\mathbb Z$-degree $2k$ up to $4k-2$. For $J=J_R\exp(Y)$, $Y\in  End^{(2k)}(\mathcal V_{\mathcal M})_{\bar 0}$ we call the  class $[Y^{\{2k\}}]$ in the quotient of vector spaces $F_{J_R} ( End^{\{2k\}}(\mathcal V_{\mathcal M})_{\bar 0})/F_{J_R} (ad(\mathcal V_{\mathcal M,\bar 0}^{\{2k\}}))$ the $2k$-th split obstruction class of $J$. 
\end{de}

The $Hol(\mathcal M,J_R,2k)$-action on $F_{J_R} ( End^{(2k)}(\mathcal V_{\mathcal M})_{\bar 0})$ is given  up to terms in $End^{(4k)}(\mathcal V_\mathcal M)_{\bar 0}$ by $(\psi^\ast,Y) \mapsto J_R(J_R-\psi^\ast.J_R)+\psi^\ast.Y$. Since $\psi^\ast.F_{J_R} (ad(\mathcal V_{\mathcal M,\bar 0}^{\{2k\}}))\subset F_{J_R} (ad(\mathcal V_{\mathcal M,\bar 0}^{\{2k\}}))$, it is  well-defined on $F_{J_R} ( End^{\{2k\}}(\mathcal V_{\mathcal M})_{\bar 0})/F_{J_R} (ad(\mathcal V_{\mathcal M,\bar 0}^{\{2k\}}))$. By
 Proposition \ref{lem2} it induces an action of  $PHol(\mathcal M,J_R,2k):=Hol(\mathcal M,J_R,2k)/\exp(\mathcal V_{\mathcal M,\bar 0}^{(2k)})$ on  $F_{J_R} ( End^{\{2k\}}(\mathcal V_{\mathcal M})_{\bar 0})/F_{J_R} (ad(\mathcal V_{\mathcal M,\bar 0}^{\{2k\}}))$.
 It follows that for an almost complex supermanifold that is split up to terms of degree $2k$ and higher, the $2k$-th split obstruction class is well-defined up to the $PHol(\mathcal M,J_R,2k)$-action.  
Note that for a given almost complex structure $J=J_R\exp(Y)$ the obstructions can be checked starting with $j=1$ iteratively: if 
$Y_{2j}=ad(\zeta_{2j})+J_R ad(\zeta_{2j})J_R$ can be solved for a $\zeta_{2j}\in \mathcal V_{\mathcal M,2j}$ then there is an automorphism of the supermanifold $\mathcal M$ such that $J=J_R \exp(Y^\prime)$ with  $Y^\prime\in  End^{(2(j+1))}(\mathcal V_{\mathcal M})_{\bar 0}$. In the non-split case this procedure ends with a well-defined $2k$ and associated orbit of $2k$-th split obstruction classes. We note as a special case:

\begin{prop}
 Let $(\mathcal M,J_R)$ be a split almost complex supermanifold of odd dimension $2(2m+r)$, $m\geq 0$, $r \in \{0,1\}$. The almost complex supermanifolds $(\mathcal M,J)$ with reduction $J_R$ that are split up to terms of degree $(2m+r)+1$ and higher, correspond bijectively to the $PHol(\mathcal M,J_R,2(m+1))$-orbits on  $F_{J_R} ( End^{(2(m+1))}(\mathcal V_{\mathcal M})_{\bar 0})/F_{J_R} (ad(\mathcal V_{\mathcal M,\bar 0}^{(2(m+1))}))$.
\end{prop}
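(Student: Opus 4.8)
The plan is to exploit the special dimension hypothesis to collapse the iterative obstruction procedure (described after the preceding Definition) into a single step, so that the $2(m+1)$-th split obstruction class becomes a complete invariant. The decisive observation is a degree count. In a Batchelor model of odd dimension $n=2(2m+r)=4m+2r$ the derivations $\mathcal V_\mathcal M$ carry $\mathbb Z$-degrees between $-1$ and $n$, so a $\mathbb Z$-homogeneous element of $End(\mathcal V_\mathcal M)$ has degree at most $n+1=4m+2r+1$; in particular $End^{(4(m+1))}(\mathcal V_\mathcal M)=0$ since $4(m+1)=4m+4>4m+2r+1$ for $r\in\{0,1\}$. Moreover an element of $End(\mathcal V_\mathcal M)_{\bar 0}$ has even $\mathbb Z$-degree, and the largest even integer not exceeding $4m+2r+1$ is at most $4m+2=4(m+1)-2$. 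Hence, restricted to even endomorphisms, the curly-bracket truncation agrees with the full filtration piece, $End^{\{2(m+1)\}}(\mathcal V_\mathcal M)_{\bar 0}=End^{(2(m+1))}(\mathcal V_\mathcal M)_{\bar 0}$, and likewise $F_{J_R}(ad(\mathcal V_{\mathcal M,\bar 0}^{\{2(m+1)\}}))=F_{J_R}(ad(\mathcal V_{\mathcal M,\bar 0}^{(2(m+1))}))$, so the target quotient of the statement is exactly $F_{J_R}(End^{(2(m+1))}(\mathcal V_\mathcal M)_{\bar 0})/F_{J_R}(ad(\mathcal V_{\mathcal M,\bar 0}^{(2(m+1))}))$.

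First I would parametrize the structures in question. An almost complex supermanifold with reduction $J_R$ that is split up to degree $(2m+r)+1$ and higher has nilpotent part $Y$ supported in degrees $\geq (2m+r)+1$; since $Y$ is even it has only even components, and the smallest even degree not less than $(2m+r)+1$ equals $2(m+1)$ in both cases $r=0,1$, so $Y\in End^{(2(m+1))}(\mathcal V_\mathcal M)_{\bar 0}$. By Lemma \ref{le1} admissibility of $Y$ is precisely the anticommutation $YJ_R+J_RY=0$ on each homogeneous component, and using $J_R^2=-Id$ one checks $F_{J_R}(\gamma)J_R+J_RF_{J_R}(\gamma)=0$ for every $\gamma$, while conversely an anticommuting $Y$ satisfies $Y=F_{J_R}(\tfrac12 Y)$. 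Thus $J\mapsto Y$ identifies the almost complex structures with reduction $J_R$ split up to degree $(2m+r)+1$ with the elements of $F_{J_R}(End^{(2(m+1))}(\mathcal V_\mathcal M)_{\bar 0})$.

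Next I would pass to isomorphism classes through Proposition \ref{lem2} and the $Hol(\mathcal M,J_R,2(m+1))$-action. The point is that all error terms in $End^{(4(m+1))}(\mathcal V_\mathcal M)$ occurring there vanish, so every approximate statement becomes an exact equality. Consequently two structures $J=J_R\exp(Y)$ and $J^\prime=J_R\exp(Y^\prime)$ with the fixed reduction $J_R$ are genuinely isomorphic via a reduction-preserving automorphism if and only if there exist $\varphi_0\in Aut(E^\ast)$ with $\varphi_0^\ast.J_R=J_R$ and $\zeta\in\mathcal V_{\mathcal M,\bar 0}^{(2(m+1))}$ such that, componentwise for $m+1\leq j<2m+2$,
\[
 Y_{2j}^\prime=\varphi_0^\ast.Y_{2j}-ad(\zeta_{2j})-J_R\,ad(\zeta_{2j})J_R=\varphi_0^\ast.Y_{2j}-F_{J_R}(ad(\zeta_{2j})).
\]
The subtraction of $F_{J_R}(ad(\zeta))$ is exactly the passage to the quotient by $F_{J_R}(ad(\mathcal V_{\mathcal M,\bar 0}^{(2(m+1))}))$, i.e.\ the quotient by the normal subgroup $\exp(\mathcal V_{\mathcal M,\bar 0}^{(2(m+1))})$, while the residual action of $\varphi_0$ is precisely the $PHol(\mathcal M,J_R,2(m+1))$-action. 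Assembling this with the parametrization of the previous step yields the asserted bijection between isomorphism classes and $PHol(\mathcal M,J_R,2(m+1))$-orbits, together with well-definedness, injectivity and surjectivity.

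I expect the main obstacle to be the bookkeeping that turns the approximate formulations of Proposition \ref{lem2} and of the $Hol$-action into exact ones: everything hinges on the vanishing $End^{(4(m+1))}(\mathcal V_\mathcal M)=0$ and on the even-parity refinement of the degree bound, which is where the hypothesis $r\in\{0,1\}$ genuinely enters and which guarantees that no higher split obstruction can survive. Granting this single input, the compatibility of the two quotient operations and the bijectivity of the induced map are routine consequences of Lemma \ref{le1} and Proposition \ref{lem2}, so no further new ingredient is required.
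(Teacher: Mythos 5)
Your opening degree count is exactly the input on which the paper's ``special case'' rests, and your first two steps are correct: in odd dimension $2(2m+r)$ every endomorphism of $\mathcal V_\mathcal M$ has $\mathbb Z$-degree at most $4m+2r+1$, hence $End^{(4(m+1))}(\mathcal V_\mathcal M)=0$, all statements made ``up to error terms'' become exact, the curly and round brackets agree on even endomorphisms, and Lemma \ref{le1} identifies the structures in question with $F_{J_R}(End^{(2(m+1))}(\mathcal V_\mathcal M)_{\bar 0})$.

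The gap is in your last step, in the claim that ``the residual action of $\varphi_0$ is precisely the $PHol(\mathcal M,J_R,2(m+1))$-action.'' The group $PHol(\mathcal M,J_R,2(m+1))=Hol(\mathcal M,J_R,2(m+1))/\exp(\mathcal V_{\mathcal M,\bar 0}^{(2(m+1))})$ is in general strictly larger than the image of the degree-preserving holomorphic automorphisms: an element $\psi^*=\exp(\xi)\psi_0^*$ of $Hol(\mathcal M,J_R,2(m+1))$ may have nonzero nilpotent components $\xi_2,\dots,\xi_{2m}$ below degree $2(m+1)$; the defining condition forces, iteratively, $[ad(\xi_{2j}),J_R]=0$ for $j\leq m$, but not $\xi_{2j}=0$, and such a class is not represented by any $\varphi_0^*$. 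It acts on a class $[Y]$ by $[Y]\mapsto[\psi_0^*.Y+[ad(\xi),\psi_0^*.Y]+\dots]$, and the correction terms $[ad(\xi_{2j}),(\psi_0^*.Y)_{2i}]$ sit in degrees $2i+2j\geq 2(m+1)+2=2m+4$. Your degree count only kills degrees $\geq 4m+4$, while even endomorphisms survive up to degree $4m+2r$; so for $r=1,\,m\geq 1$ or $r=0,\,m\geq 2$ these corrections neither vanish nor lie in $F_{J_R}(ad(\mathcal V_{\mathcal M,\bar 0}^{(2(m+1))}))$ in general. Correspondingly, on the other side of your bijection you admit only isomorphisms of the restricted form of Proposition \ref{lem2}, namely $\varphi^*(\varphi_0^*)^{-1}\in\exp(\mathcal V_{\mathcal M,\bar 0}^{(2(m+1))})$, whereas a genuine isomorphism between two structures with reduction $J_R$ split up to degree $2(m+1)$ is only constrained to lie in $Hol(\mathcal M,J_R,2(m+1))$ and may carry exactly such a low-degree holomorphic nilpotent part. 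What you actually prove is therefore a bijection between the (possibly finer) restricted-isomorphism classes and the orbits of the (possibly smaller) subgroup generated by $Hol(\mathcal M,J_R)$ and $\exp(\mathcal V_{\mathcal M,\bar 0}^{(2(m+1))})$ --- not the statement of the proposition. To close the argument you need to work with the full $Hol(\mathcal M,J_R,2(m+1))$-action $(\psi^*,Y)\mapsto J_R(J_R-\psi^*.J_R)+\psi^*.Y$ described in the paper (exact here thanks to your degree count), observing that every isomorphism between such structures automatically lies in this group, and that its orbits are the $PHol$-orbits on the quotient. Only when $2m+4>4m+2r$, i.e.\ for $(m,r)\in\{(0,0),(0,1),(1,0)\}$, do the low-degree corrections vanish for degree reasons, and only in those cases does your argument as written suffice.
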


As a technical tool we note an identification for the quotient appearing in the split obstruction classes. Denote by $\mathcal E^1_\mathcal M=\mathcal E^1_{\mathcal M,\bar 0}\oplus \mathcal E^1_{\mathcal M,\bar 1}$ the global super-1-forms on $\mathcal M$ and by $\mbox{d}_\mathcal M$ the de Rham operator on the algebra $\mathcal E_\mathcal M$ of superforms. It is $End(\mathcal V_\mathcal M)=\mathcal V_\mathcal M \otimes_{\mathcal C^\infty_\mathcal M(M)} \mathcal E^{1}_\mathcal M $.

\begin{prop} \label{prop1}
For all $k$, the map
\begin{align*}
  \Theta_{J_R}:\ F_{J_R}\big(\mathcal V_\mathcal M \otimes_{\mathcal C^\infty_\mathcal M(M)}  \mathcal E^{1}_\mathcal M   \big)^{(2k)}_{\bar 0} & \longrightarrow  F_{J_R} ( End^{(2k)}(\mathcal V_{\mathcal M})_{\bar 0})/F_{J_R} (ad(\mathcal V_{\mathcal M,\bar 0}^{(2k)})) 
  \end{align*}
 locally for homogeneous arguments defined by  $$F_{J_R}(\chi\otimes {\rm d}_\mathcal M f) \longmapsto (-1)^{|f||\chi|}f\cdot F_{J_R}( ad(\chi))+F_{J_R} (ad(\mathcal V_{\mathcal M,\bar 0}^{(2k)}))$$
is a well-defined, surjective morphism of $\mathbb Z$-filtered super vector spaces. For any element $\psi^\ast \in Hol(\mathcal M,J_R,2k)$ and $[\psi^\ast] \in PHol(\mathcal M,J_R,2k)$ it is $\Theta_{J_R}(\psi^\ast.Z)=[\psi^\ast].(\Theta_{J_R}(Z))$. 
\end{prop}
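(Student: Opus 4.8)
The plan is to prove the four claims about $\Theta_{J_R}$ in the following order: first that the stated formula gives a well-defined map on the domain $F_{J_R}(\mathcal V_\mathcal M \otimes \mathcal E^1_\mathcal M)^{(2k)}_{\bar 0}$, second that it lands in the right quotient and respects the $\mathbb Z$-filtration and parity, third that it is surjective, and finally that it intertwines the $Hol$-action with the $PHol$-action. The key structural input is the identity $End(\mathcal V_\mathcal M)=\mathcal V_\mathcal M \otimes_{\mathcal C^\infty_\mathcal M(M)} \mathcal E^1_\mathcal M$ stated just before the proposition, which lets me trade endomorphisms for the more flexible tensor-with-one-form description where the de Rham operator is available.

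**Well-definedness** is where I expect the real work to be. The formula is specified only locally and only on decomposable tensors $\chi \otimes \mathrm d_\mathcal M f$, so I must check two things. First, that $f \cdot F_{J_R}(ad(\chi))$ depends only on the tensor $\chi \otimes \mathrm d_\mathcal M f$ and not on the chosen decomposition: the obstruction is the Leibniz relation $\mathrm d_\mathcal M(fg)=f\,\mathrm d_\mathcal M g+(-1)^{|f||g|}g\,\mathrm d_\mathcal M f$, so I must verify that the image of $\chi\otimes \mathrm d_\mathcal M(fg)$ computed two ways agrees \emph{modulo} $F_{J_R}(ad(\mathcal V_{\mathcal M,\bar 0}^{(2k)}))$. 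The point is precisely that passing to the quotient by $F_{J_R}(ad(\cdots))$ absorbs the discrepancy, because $F_{J_R}(ad(f\chi))$ and $f\,F_{J_R}(ad(\chi))$ differ by a term that is itself of the form $F_{J_R}(ad(\text{something}))$; this is the identity that forces the quotient into the target and must be checked carefully with the correct Koszul signs. Second, I must confirm that local definitions on overlapping charts glue, which follows since the formula is $\mathcal C^\infty_\mathcal M(M)$-linear in the first factor and the gluing data for $\mathcal V_\mathcal M$ and $\mathcal E^1_\mathcal M$ are compatible.

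**Surjectivity** should then be comparatively direct. Any element of the target is represented by some $F_{J_R}(\gamma)$ with $\gamma \in End^{(2k)}(\mathcal V_\mathcal M)_{\bar 0}$; writing $\gamma$ under the identification as a finite sum $\sum_i \chi_i \otimes \mathrm d_\mathcal M f_i$ and applying $F_{J_R}$, I can hit each summand using the defining formula, and the filtration degree is preserved because $F_{J_R}$ is $\mathbb Z$-degree preserving (as noted in the excerpt) and because the tensor-degree of $\chi_i\otimes \mathrm d_\mathcal M f_i$ matches. Compatibility with the $\mathbb Z$-filtration and with parity follows from the same degree- and parity-preservation of $F_{J_R}$ together with the explicit sign $(-1)^{|f||\chi|}$ in the formula.

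**Equivariance** is the last step. Here I would use that $\psi^\ast$ acts on $End(\mathcal V_\mathcal M)$ by $\gamma\mapsto \psi^\ast.\gamma$ and that this action is an algebra automorphism commuting with $F_{J_R}$ up to the subspace $F_{J_R}(ad(\mathcal V_{\mathcal M,\bar 0}^{\{2k\}}))$ by which we quotient — exactly the containment $\psi^\ast.F_{J_R}(ad(\mathcal V_{\mathcal M,\bar 0}^{\{2k\}}))\subset F_{J_R}(ad(\mathcal V_{\mathcal M,\bar 0}^{\{2k\}}))$ established in the paragraph before the definition. Since $\psi^\ast$ pulls back functions and pushes forward vector fields, it sends $\chi\otimes \mathrm d_\mathcal M f$ to $(\psi^\ast.\chi)\otimes \mathrm d_\mathcal M(\psi^\ast f)$, and $ad$ is natural with respect to $\psi^\ast$, so $\Theta_{J_R}(\psi^\ast.Z)$ and $[\psi^\ast].\Theta_{J_R}(Z)$ agree once one descends to the $PHol$-quotient, where the ambiguity coming from the normal subgroup $\exp(\mathcal V_{\mathcal M,\bar 0}^{(2k)})$ lands precisely in the subspace we mod out by. The main obstacle throughout is bookkeeping the interaction between the non-$\mathcal C^\infty_\mathcal M(M)$-linearity of $ad$ in its argument and the quotient that repairs it; getting the Koszul signs right in the Leibniz step is the crux.
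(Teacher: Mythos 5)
Your outline locates the crux correctly (the quotient must absorb the failure of $ad$ to be $\mathcal C^\infty_\mathcal M(M)$-linear), but the identity you place at the center of the well-definedness step is false, and both that step and your surjectivity step run on it. You claim that $f\cdot F_{J_R}(ad(\chi))$ and $F_{J_R}(ad(f\chi))$ ``differ by a term that is itself of the form $F_{J_R}(ad(\text{something}))$''. They do not. For homogeneous $f,\chi$ the bracket computation $[f\chi,\upsilon]=f[\chi,\upsilon]-(-1)^{(|f|+|\chi|)|\upsilon|}\upsilon(f)\chi$ gives
$$\chi\otimes{\rm d}_\mathcal M f=(-1)^{|f||\chi|}\bigl(f\cdot ad(\chi)-ad(f\chi)\bigr),$$
so, applying the $\mathcal C^\infty_\mathcal M(M)$-linear map $F_{J_R}$, the discrepancy $f\cdot F_{J_R}(ad(\chi))-F_{J_R}(ad(f\chi))$ equals $(-1)^{|f||\chi|}F_{J_R}(\chi\otimes{\rm d}_\mathcal M f)$: it is exactly the generic \emph{input} of $\Theta_{J_R}$, not an element of $F_{J_R}(ad(\mathcal V_{\mathcal M,\bar 0}^{(2k)}))$. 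If such elements always lay in $F_{J_R}(ad(\mathcal V_{\mathcal M,\bar 0}^{(2k)}))$, then, since tensors $\chi\otimes{\rm d}_\mathcal M f$ locally span $End^{(2k)}(\mathcal V_\mathcal M)_{\bar 0}$, the subspace you quotient by would swallow the whole space, every split obstruction class would vanish, and the examples of Section \ref{sec3} could not exist: there the class of $\eta Y_\eta=F_{J_R}(\eta\,\pi(\xi_{J_M})\otimes{\rm d}_\mathcal M\eta)$ --- precisely an element of this form --- is shown to be nowhere vanishing. So what you propose to ``check carefully with the correct Koszul signs'' is not a sign issue; it is wrong as stated.

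The displayed identity, read the right way around, is the entire proof, and it is the paper's proof. It says that the prescribed output $(-1)^{|f||\chi|}f\cdot F_{J_R}(ad(\chi))$ differs from the input $F_{J_R}(\chi\otimes{\rm d}_\mathcal M f)$ by $(-1)^{|f||\chi|}F_{J_R}(ad(f\chi))$, and $f\chi$ does lie in $\mathcal V_{\mathcal M,\bar 0}^{(2k)}$ because it has the same parity and $\mathbb Z$-degree as $\chi\otimes{\rm d}_\mathcal M f$. Hence $\Theta_{J_R}$ is nothing but the canonical projection $F_{J_R}(End^{(2k)}(\mathcal V_\mathcal M)_{\bar 0})\to F_{J_R}(End^{(2k)}(\mathcal V_\mathcal M)_{\bar 0})/F_{J_R}(ad(\mathcal V_{\mathcal M,\bar 0}^{(2k)}))$ read through the identification $End(\mathcal V_\mathcal M)=\mathcal V_\mathcal M\otimes_{\mathcal C^\infty_\mathcal M(M)}\mathcal E^1_\mathcal M$. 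That observation disposes of everything at once: well-definedness (including relations your Leibniz check never addresses, such as two different tensors having the same $F_{J_R}$-image, since $F_{J_R}$ has a kernel, or coefficients being moved across the tensor product), gluing, surjectivity, filtration and parity compatibility, and equivariance, which descends from $\psi^\ast.F_{J_R}(ad(\mathcal V_{\mathcal M,\bar 0}^{\{2k\}}))\subset F_{J_R}(ad(\mathcal V_{\mathcal M,\bar 0}^{\{2k\}}))$. Your surjectivity argument has the same hidden dependence on the corrected identity: ``hitting each summand by the defining formula'' only produces the class of $(-1)^{|f_i||\chi_i|}f_i\,F_{J_R}(ad(\chi_i))$, and identifying that class with $[F_{J_R}(\chi_i\otimes{\rm d}_\mathcal M f_i)]$ is again exactly the identity above. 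Finally, your equivariance sketch treats the action as $Y\mapsto\psi^\ast.Y$ and forgets the affine term $J_R(J_R-\psi^\ast.J_R)$ in the $Hol(\mathcal M,J_R,2k)$-action; once $\Theta_{J_R}$ is recognized as the projection this causes no trouble, but in your formulation it is a further unaddressed gap.
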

\begin{proof}
For homogeneous components of $\chi\otimes  {\rm d}_\mathcal M f \in \mathcal V_\mathcal M \otimes_{\mathcal C^\infty_\mathcal M(M)}  \mathcal E^{1}_\mathcal M$ the decomposition 
 $\chi\otimes  {\rm d}_\mathcal M f=(-1)^{|f||\chi|}(f\cdot ad(\chi)-ad(f\chi))$  
is well-defined up to terms in $ad(\mathcal V_{\mathcal M})$. 
\end{proof}

\section{Non-Split Riemannian supermanifolds}
\noindent
Let $(\mathcal M,g)$ be a Riemannian supermanifold with even non-degenerate supersymmetric  form $g\in Hom(\mathcal V_\mathcal M \otimes_{\mathcal C^\infty_\mathcal M(M)} \mathcal V_\mathcal M,\mathcal C^\infty_\mathcal M(M))_{\bar 0}$. Here we will mostly regard $g$ as an isomorphism of $\mathcal C^\infty_\mathcal M$-modules $g \in Hom(\mathcal V_\mathcal M, \mathcal V_\mathcal M^\ast)_{\bar 0}$ with  $g(X)(Y)=(-1)^{|X||Y|}g(Y)(X)$ for homogeneous arguments. The context will fix which point of view is used. For a given  Batchelor model  $\mathcal M \to (M,\Gamma_{\Lambda E^\ast}^\infty)$ decompose $g=g_R(Id+g_N)$ with invertible $g_R=g_0+g_2$ and nilpotent $g_N\in End^{(2)}(\mathcal V_\mathcal M)_{\bar 0}$ such that $g_0g_N \in  Hom^{(4)}(\mathcal V_\mathcal M, \mathcal V_\mathcal M^\ast)_{\bar 0}$. With the finite log and exp series we write $g=g_R\exp(W)$ with $W \in End^{(2)}_{g_0}(\mathcal V_\mathcal M)_{\bar 0}$, where $End^{(2k)}_{g_0}(\mathcal V_\mathcal M)_{\bar 0}$ denotes those $W \in End^{(2k)}(\mathcal V_\mathcal M)_{\bar 0}$ such that $g_0W \in  Hom^{(2k+2)}(\mathcal V_\mathcal M, \mathcal V_\mathcal M^\ast)_{\bar 0}$.
\begin{lem}\label{le2}
 If the tensor $g=g_R\exp(W)$, $W \in End^{(2k)}_{g_0}(\mathcal V_\mathcal M)_{\bar 0}$ is a Riemannian metric then $g_R$ is a Riemannian metric and  $g_R(W(\cdot),\cdot)=g_R(\cdot,W(\cdot))$ up to terms of degree $4k+2$ and higher.
\end{lem}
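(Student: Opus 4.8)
The plan is to mirror the proof of Lemma \ref{le1}, replacing the defining identity $J^2=-Id$ by the supersymmetry $g(X,Y)=(-1)^{|X||Y|}g(Y,X)$ of $g$, and to track $\mathbb Z$-degrees through the \emph{inhomogeneous} reduction $g_R=g_0+g_2$. The first step is to identify $g_R$ as the low-degree truncation of $g$. Expanding $g=g_R\exp(W)=(g_0+g_2)\,(Id+W+\tfrac12 W^2+\dots)$ and using $W\in End^{(2k)}_{g_0}(\mathcal V_\mathcal M)_{\bar 0}$, the defining property $g_0 W\in Hom^{(2k+2)}(\mathcal V_\mathcal M,\mathcal V_\mathcal M^\ast)_{\bar 0}$ together with $g_2 W\in Hom^{(2k+2)}$ (degree $2$ composed with degree $2k$) yields $g_R W\in Hom^{(2k+2)}$. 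Hence $g=g_0+g_2+(\text{terms of degree}\ge 2k+2)$, and since $k\ge 1$ this exhibits $g_R$ as exactly the part of $g$ of form-degree $0$ and $2$.

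Next I would check that $g_R$ is a Riemannian metric. Non-degeneracy is immediate, since $\exp(W)$ is invertible (as $W$ is nilpotent) and $g$ is non-degenerate, so $g_R=g\exp(-W)$ is non-degenerate as well. For supersymmetry I observe that the transposition $h\mapsto h^{T}$, with $h^{T}(X,Y):=(-1)^{|X||Y|}h(Y,X)$, preserves the $\mathbb Z$-degree of a form; therefore the identity $g=g^{T}$ splits degree by degree into $g_{(n)}=(g_{(n)})^{T}$ for each homogeneous component. Reading this off in degrees $0$ and $2$ and combining with the previous step shows that $g_R=g_{(0)}+g_{(2)}$ is supersymmetric, hence a Riemannian metric.

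I would then feed both symmetries back into $g=g_R\exp(W)$. Writing $g(X,Y)=(-1)^{|X||Y|}g(Y,X)$ as $g_R(\exp(W)X,Y)=(-1)^{|X||Y|}g_R(\exp(W)Y,X)$ and applying the supersymmetry of $g_R$ on the right-hand side, using $|\exp(W)Y|=|Y|$ since $W$ is even, the two parity signs cancel and one obtains the $g_R$-self-adjointness $g_R(\exp(W)X,Y)=g_R(X,\exp(W)Y)$. Subtracting the $Id$-contributions and expanding $\exp(W)-Id=W+\tfrac12 W^2+\dots$, the linear terms give $g_R(WX,Y)=g_R(X,WY)$, while the quadratic and higher terms are controlled by degree: from $g_R W\in Hom^{(2k+2)}$ one gets $g_R W^2=(g_R W)W\in Hom^{(4k+2)}$, and similarly for the higher powers. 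Thus the self-adjointness collapses to $g_R(W(\cdot),\cdot)=g_R(\cdot,W(\cdot))$ up to terms of degree $4k+2$ and higher.

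The step I expect to be the main obstacle is precisely the degree bookkeeping forced by the inhomogeneous $g_R=g_0+g_2$: the improvement from the naive estimate $g_R W^2\in Hom^{(4k)}$ to the asserted $Hom^{(4k+2)}$ rests on the extra condition $g_0 W\in Hom^{(2k+2)}$ built into the definition of $End^{(2k)}_{g_0}(\mathcal V_\mathcal M)_{\bar 0}$, which promotes $g_R W$ into $Hom^{(2k+2)}$. This is exactly the \emph{adjustment of techniques} over the complex case, where $J_R$ was homogeneous of degree $0$ and the analogous relation held to all orders by induction; here the lack of a pure degree for $g_R$ restricts the clean statement to the first order $4k+2$. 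The other point to watch is the parity-sign tracking in the self-adjointness step, which must use the evenness of $W$ to ensure the two signs cancel.
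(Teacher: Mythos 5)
Your proof is correct and follows essentially the same route as the paper: both derive the self-adjointness $g_R(\exp(W)\cdot,\cdot)=g_R(\cdot,\exp(W)\cdot)$ from the supersymmetry of $g$ and of $g_R$ (the latter obtained degree-by-degree), and both kill the quadratic error using the condition $g_0W\in Hom^{(2k+2)}(\mathcal V_\mathcal M,\mathcal V_\mathcal M^\ast)_{\bar 0}$ built into $End^{(2k)}_{g_0}(\mathcal V_\mathcal M)_{\bar 0}$. The only cosmetic difference is that you bound all higher powers at once via $g_RW\in Hom^{(2k+2)}$, hence $g_RW^n\in Hom^{(4k+2)}$ for $n\geq 2$, whereas the paper isolates the single degree-$4k$ error term $\frac{1}{2}W_{2k}^2$; you also make explicit the steps (truncation identification, non-degeneracy, sign cancellation) that the paper leaves implicit.
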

\begin{proof} 
Due to supersymmetry $g_R(\exp(W)(\cdot),\cdot)=g_R(\cdot,\exp(W)(\cdot))$. The approximation $\exp(W)=1+W$ holds up to terms of degree $4k$ with error term $\frac{1}{2}W_{2k}^2$ in degree $4k$. Since $W \in End^{(2k)}_{g_0}(\mathcal V_\mathcal M)_{\bar 0}$ it is $g_RW_{2k}^2 \in  Hom^{(4k+2)}(\mathcal V_\mathcal M, \mathcal V_\mathcal M^\ast)_{\bar 0}$.
\end{proof}

We call $g_R$ the \textit{reduction} of $g$. Here the metric $g$ appears as a deformation of the underlying Riemannian metric $g_{0}$ on $M$ via $t\mapsto (g_{0}+t\cdot g_{2})\exp(\sum_{j=1}^\infty t^jW_{2j})$.  
Note that $g_R$ also yields a non-degenerate alternating form on the bundle $E$. So in contrast to the non-graded case there is a true condition for the existence of a Riemannian metric: the existence of a nowhere vanishing section of $E\wedge E\to M$. In particular the odd dimension of $\mathcal M$ has to be even. A Riemannian supermanifold $(\mathcal M,g)$ is {split}, if there is a  Batchelor model, such that the  Riemannian metric $g$ has nilpotent component $W=0$. Again the appearing deformations are essentially local via cutting off $g_R\exp(W)$ by $(g_{0}+f\cdot g_{2})\exp(\sum_{j=1}^\infty f^jW_{2j})$ with cut-off function $f$.

\bigskip\noindent
As before let $\Phi=(\varphi,\varphi^\ast)$, $\varphi^\ast=\exp(\zeta)\varphi_0^\ast$  be an automorphism of the supermanifold $\mathcal M$.  
We obtain $\varphi^\ast.g$ given by $(\varphi^\ast.g)(\chi)(\chi^\prime)=\varphi^\ast\big( g((\varphi^\ast)^{-1}\chi \varphi^\ast,(\varphi^\ast)^{-1}\chi^\prime \varphi^\ast)\big)$.  Assuming $\zeta \in \mathcal V_{\mathcal M,\bar 0}^{(2k)}$ this yields: 
\begin{align}\label{asd}
 \varphi^\ast.g=\varphi_0^\ast.g-(\varphi_0^\ast.g)(ad(\zeta)\otimes Id + Id \otimes ad(\zeta))+\zeta (\varphi_0^\ast.g)
\end{align}
in $Hom(\mathcal V_\mathcal M, \mathcal V_\mathcal M^\ast)_{\bar 0}$ up to terms in $Hom^{(4k)}(\mathcal V_\mathcal M, \mathcal V_\mathcal M^\ast)_{\bar 0}$. Note that for the term $\zeta (\varphi_0^\ast.g)$, the metric is regarded as an element in  $Hom(\mathcal V_\mathcal M\otimes_{\mathcal C^\infty_\mathcal M(M)}\mathcal V_\mathcal M ,\mathcal C^\infty_\mathcal M(M))_{\bar 0}$. Define $\mathcal V_{\mathcal M,g_0,\bar 0}^{(2k)}$ to be the elements in $\zeta \in \mathcal V_{\mathcal M,\bar 0}^{(2k)}$ satisfying $g_0(ad(\zeta)\otimes Id + Id \otimes ad(\zeta))+\zeta g_0\in Hom^{(2k+2)}(\mathcal V_\mathcal M,\mathcal V_\mathcal M^\ast)_{\bar 0}$. Comparing the terms in (\ref{asd}) with respect to the degree  yields:
 
\begin{prop}\label{lem3}
 The Riemannian supermanifolds  $(\mathcal M,g)$ and $(\mathcal M,g^\prime)$ with Riemannian metrics $g=g_R\exp(W)$, $g^\prime=g_R^\prime\exp(W^\prime)$, $W \in End^{(2k)}_{g_0}(\mathcal V_\mathcal M)_{\bar 0}$, $W^\prime \in End^{(2k)}_{g_0^\prime}(\mathcal V_\mathcal M)_{\bar 0}$ are isomorphic up to error terms in $Hom^{(4k)}(\mathcal V_\mathcal M, \mathcal V_\mathcal M^\ast)_{\bar 0}$  via an automorphism $\varphi^\ast$ with $\varphi^\ast(\varphi_0^{\ast})^{-1}\in\exp(\mathcal V_{\mathcal M,g_0^\prime,\bar 0}^{(2k)})$   if and only if there exist $\varphi_0 \in Aut(E^\ast)$ and $\zeta \in  \mathcal V_{\mathcal M,g_0^\prime,\bar 0}^{(2k)}$ such that 
$ g_R^\prime=\varphi_0^\ast.g_R$ and:
\begin{align*}
  W_{2j}^\prime=\varphi_0^\ast.W_{2j}-ad(\zeta_{2j})-\Big((g_R^\prime)^{-1} (ad^\ast(\zeta)-\zeta) g_R^\prime\Big)_{2j}, \quad k\leq j<2k
\end{align*}
 Here $\varphi_0^\ast.W$ is defined by $(\varphi_0^\ast.W)(\chi):=\varphi^\ast_0(W((\varphi_0^\ast)^{-1}\chi \varphi_0^\ast))(\varphi_0^\ast)^{-1}$ and the homomorphism $ad^\ast:\mathcal V_\mathcal M \to End_\mathbb R(End_\mathbb R(\mathcal V_\mathcal M,\mathcal C^\infty_\mathcal M(M)))$ denotes the  representation dual to $ad$.
\end{prop}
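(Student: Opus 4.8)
The plan is to carry out the Riemannian analogue of the proof of Proposition \ref{lem2}: the whole statement is extracted from a degree-by-degree comparison of the expansion (\ref{asd}), which has already been established, and both implications come out of the same comparison. For the direction ``isomorphic $\Rightarrow$ existence of $\varphi_0,\zeta$'' I would read off $\varphi_0$ (the degree-preserving part, inducing the bundle automorphism) and $\zeta\in\mathcal V_{\mathcal M,g_0^\prime,\bar 0}^{(2k)}$ from the given $\varphi^\ast=\exp(\zeta)\varphi_0^\ast$, insert them into (\ref{asd}), and compare the homogeneous components of $\varphi^\ast.g=g^\prime$. For the converse I would build $\varphi^\ast=\exp(\zeta)\varphi_0^\ast$ from the given $\varphi_0$ and $\zeta$ and verify $\varphi^\ast.g=g^\prime$ modulo $Hom^{(4k)}(\mathcal V_\mathcal M,\mathcal V_\mathcal M^\ast)_{\bar 0}$ by running the same comparison in reverse. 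The two adjustments relative to the almost complex case, already flagged in the text, are that the reduction $g_R=g_0+g_2$ is not of pure $\mathbb Z$-degree and that the automorphism acts on both arguments of the form, so that passing from the $Hom(\mathcal V_\mathcal M,\mathcal V_\mathcal M^\ast)$-picture of (\ref{asd}) to the $End(\mathcal V_\mathcal M)$-picture of the claimed formula for $W^\prime$ forces the dual action $ad^\ast$ into play.

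First I would settle the reduction. Since $\varphi_0^\ast$ preserves the $\mathbb Z$-degree and is an algebra automorphism, $\varphi_0^\ast.g=(\varphi_0^\ast.g_R)\exp(\varphi_0^\ast.W)$, so its reduction is $\varphi_0^\ast.g_R$. Every correction term in (\ref{asd}) is built from $ad(\zeta)$ and the derivation $\zeta$, and since $\zeta\in\mathcal V_{\mathcal M,\bar 0}^{(2k)}$ the map $ad(\zeta)$ raises the $\mathbb Z$-degree by at least $2k$; the degree $0$ comparison of $\varphi^\ast.g=g^\prime$ is then immediate and gives $g_0^\prime=\varphi_0^\ast.g_0$, while the degree $2$ part of the correction (which occurs only when $k=1$) is controlled by the defining condition of $\mathcal V_{\mathcal M,g_0^\prime,\bar 0}^{(2k)}$, so that the degree $2$ comparison gives $g_2^\prime=\varphi_0^\ast.g_2$ and altogether $g_R^\prime=\varphi_0^\ast.g_R$. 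I would then write $\varphi_0^\ast.g=g_R^\prime\exp(\varphi_0^\ast.W)$ and pass to the endomorphism picture by left multiplication with $(g_R^\prime)^{-1}$, turning the defining relation into $\exp(W^\prime)=(g_R^\prime)^{-1}(\varphi^\ast.g)$ modulo the relevant filtration.

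Next I would expand the three terms of (\ref{asd}) using $\exp(\varphi_0^\ast.W)=Id+\varphi_0^\ast.W$ up to degree $\geq 4k$. After cancelling the factor $(g_R^\prime)^{-1}g_R^\prime$, the $ad(\zeta)\otimes Id$ term contributes $-ad(\zeta)$ on the endomorphism side. For the remaining two terms I would rewrite $(\varphi_0^\ast.g)(Id\otimes ad(\zeta))$ through the duality defining $ad^\ast$ as the representation dual to $ad$, and collect it with the scalar derivative $\zeta(\varphi_0^\ast.g)$: the non-tensorial pieces (the derivative of $g_R^\prime$ landing on the frame coefficients) cancel against the matching non-tensorial pieces of the $ad(\zeta)\otimes Id$ term, exactly as they must for $\varphi^\ast.g$ to be a genuine tensor, and the remainder assembles, after conjugation by $g_R^\prime$, into $-(g_R^\prime)^{-1}(ad^\ast(\zeta)-\zeta)g_R^\prime$. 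Reading off the degree $2j$ component for $k\leq j<2k$ then produces $W_{2j}^\prime=\varphi_0^\ast.W_{2j}-ad(\zeta_{2j})-\big((g_R^\prime)^{-1}(ad^\ast(\zeta)-\zeta)g_R^\prime\big)_{2j}$.

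I expect the main obstacle to be the interlocking degree bookkeeping forced by $g_R=g_0+g_2$, together with the super-sign accounting in the transpose identity for $ad^\ast$. The errors generated by truncating $\exp(\varphi_0^\ast.W)$ and by the mixed-degree products $g_R^\prime\,ad(\zeta)$ and $ad^\ast(\zeta)\,g_R^\prime$ only stay inside $Hom^{(4k)}(\mathcal V_\mathcal M,\mathcal V_\mathcal M^\ast)_{\bar 0}$ because of the compatibility encoded in $W\in End^{(2k)}_{g_0}(\mathcal V_\mathcal M)_{\bar 0}$ and $\zeta\in\mathcal V_{\mathcal M,g_0^\prime,\bar 0}^{(2k)}$: both conditions say that composing with the degree $0$ part $g_0^\prime$ gains a further degree, which is precisely what compensates for the degree lost when the $g_2$-part of $g_R$ is replaced by $g_0$ in the leading comparison. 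The delicate point is to show that $\zeta\in\mathcal V_{\mathcal M,g_0^\prime,\bar 0}^{(2k)}$ is exactly the condition under which the endomorphism $ad(\zeta)+(g_R^\prime)^{-1}(ad^\ast(\zeta)-\zeta)g_R^\prime$ lies in $End^{(2k)}_{g_0^\prime}(\mathcal V_\mathcal M)_{\bar 0}$, so that the proposed $W^\prime$ is an admissible nilpotent component and the degree $2j$ truncation is internally consistent for all $k\leq j<2k$; this uses the supersymmetry of $g_0^\prime$ to match $(g_0^\prime)^{-1}ad^\ast(\zeta)g_0^\prime$ with the $g_0^\prime$-transpose of $ad(\zeta)$. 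Once the conventions for $ad^\ast$ and the Koszul signs are fixed, the identification of the correction term is a direct, if lengthy, verification.
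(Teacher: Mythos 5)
Your proposal is correct and takes essentially the same route as the paper: the paper offers no separate proof of Proposition \ref{lem3} at all — it is stated as following directly from "comparing the terms in (\ref{asd}) with respect to the degree," with $\mathcal V_{\mathcal M,g_0,\bar 0}^{(2k)}$ defined immediately beforehand exactly so that the low-degree corrections drop out and $g_R^\prime=\varphi_0^\ast.g_R$ survives the comparison. Your elaboration (reduction first, then passing to the $End(\mathcal V_\mathcal M)$-picture via $(g_R^\prime)^{-1}$, with the $Id\otimes ad(\zeta)$ and $\zeta(\varphi_0^\ast.g)$ terms assembling into $-(g_R^\prime)^{-1}(ad^\ast(\zeta)-\zeta)g_R^\prime$) just makes explicit the bookkeeping the paper leaves implicit.
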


Fix $g_R$ from now on and denote by $Iso(\mathcal M,g_R,2k+2)$ the automorphisms $\psi^\ast=\exp(\xi)\psi^\ast_0$ of $\mathcal M$ such that  $g_R=\psi^\ast.g_R$ up to a term $S:=g_R-\psi^\ast.g_R \in  Hom^{(2k+2)}(\mathcal V_\mathcal M,\mathcal V_\mathcal M^\ast)_{\bar 0}$.  Note that this forces $g_0g_R^{-1}S \in Hom^{(2k+2)}(\mathcal V_\mathcal M,\mathcal V_\mathcal M^\ast)_{\bar 0}$. Further $\exp(\mathcal V_{\mathcal M,g_0,\bar 0}^{(2k)})\subset Iso(\mathcal M,g_R,2k+2)$ is a normal subgroup.

\bigskip\noindent
Parallel to the analysis of the almost complex structures we define the maps
\begin{align*}
 &F_{g_R}: End(\mathcal V_\mathcal M) \to End(\mathcal V_\mathcal M),&& \ F_{g_R}(\gamma):=\gamma+g_R^{-1} \gamma^\ast g_R\\
 &G_{g_R}: \quad \mathcal V_\mathcal M \quad\quad\to End(\mathcal V_\mathcal M),&& \ G_{g_R}(\zeta):=ad(\zeta)+g_R^{-1} (ad^\ast(\zeta)-\zeta) g_R&&&&&&&&&
\end{align*}
denoting by $\gamma^\ast$ the induced element in $End(\mathcal V_\mathcal M^\ast)$ and $ad^\ast$ as above. By Lemma \ref{le2} the elements in $F_{g_R}(End^{(2k)}_{g_0}(\mathcal V_\mathcal M)_{\bar 0})$  are up to degree $\geq 4k+2$ the appearing $W$s in Riemannian metrics $g=g_R\exp(W)$ that are split up to degree $\geq 2k$. Further $G_{g_R}(\mathcal V_{\mathcal M,g_0,\bar 0}^{(2k)})$ lies in $F_{g_R}(End^{(2k)}_{g_0}(\mathcal V_\mathcal M)_{\bar 0})$. 

\begin{de}  For $g=g_R\exp(W)$ with $W\in  End^{(2k)}(\mathcal V_{\mathcal M})_{\bar 0}$ we call the  class $[W^{\{2k\}}]$ in the quotient of vector spaces $F_{g_R} ( End^{(2k)}_{g_0}(\mathcal V_{\mathcal M})_{\bar 0})^{\{2k\}}/G_{g_R} (\mathcal V_{\mathcal M,g_0,\bar 0}^{(2k)})^{\{2k\}}$ the $2k$-th split obstruction class of $g$. 
\end{de}

The $Iso(\mathcal M,g_R,2k+2)$-action on  $F_{g_R} ( End^{(2k)}_{g_0}(\mathcal V_{\mathcal M})_{\bar 0})$ is given up to terms in $End^{(4k)}(\mathcal V_\mathcal M)_{\bar 0}$ by $(\psi^\ast,W)\mapsto  g_R^{-1}(\psi^\ast.g_R-g_R)+ \psi^\ast.W$. It is $\psi^\ast.G_{g_R} (\mathcal V_{\mathcal M,g_0,\bar 0}^{(2k)})\subset G_{g_R} (\mathcal V_{\mathcal M,g_0,\bar 0}^{(2k)})$ by direct calculation. Analog to the almost complex case using Proposition \ref{lem3}, the action of  $Iso(\mathcal M,g_R,2k+2)$ induces a $PIso(\mathcal M,g_R,2k+2):=Iso(\mathcal M,g_R,2k+2)/\exp(\mathcal V_{\mathcal M,g_0,\bar 0}^{(2k)})$-action on the quotient $F_{g_R} ( End^{(2k)}_{g_0}(\mathcal V_{\mathcal M})_{\bar 0})^{\{2k\}}/G_{g_R} (\mathcal V_{\mathcal M,g_0,\bar 0}^{(2k)})^{\{2k\}}$. Hence the $2k$-th split obstruction class is well-defined  up to the $PIso(\mathcal M,g_R,2k+2)$-action for a Riemannian supermanifold that is split up to terms of degree $2k+2$ and higher. We have in particular analogously to the almost complex case:
\begin{prop}
 Let $(\mathcal M,g_R)$ be a split Riemannian supermanifold of odd dimension $2(2m+r)$, $m\geq 0$, $r \in\{0,1\}$. The Riemannian supermanifolds $(\mathcal M,g)$ with reduction $g_R$ that are split up to terms of degree $(2m+r)+3$ and higher, correspond bijectively to the $Iso(\mathcal M,g_R,2(m+2))$-orbits on   $F_{g_R} ( End^{(2(m+1))}(\mathcal V_{\mathcal M})_{\bar 0})/G_{g_R} (\mathcal V_{\mathcal M,g_0,\bar 0}^{(2(m+1))})$.
\end{prop}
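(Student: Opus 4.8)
The plan is to read this proposition as the exact Riemannian counterpart of the preceding almost complex special case: I would specialize the general isomorphism criterion of Proposition~\ref{lem3} together with the obstruction-class construction to the particular odd dimension, in which the degree truncations that are only approximate in general become sharp. First I would unwind the hypothesis. Writing $g=g_R\exp(W)$ and setting $2k:=2(m+1)$ (so that $2k+2=2(m+2)$), I claim that ``$g$ is split up to degree $(2m+r)+3$ and higher'' means that the forms $g$ and $g_R$ agree up to $Hom$-degree $(2m+r)+3$; since even homomorphisms carry even $\mathbb Z$-degree this threshold rounds up to $2m+4=2(m+2)=2k+2$ for both $r\in\{0,1\}$, i.e.\ $g-g_R\in Hom^{(2k+2)}(\mathcal V_\mathcal M,\mathcal V_\mathcal M^\ast)_{\bar 0}$. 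Using $g-g_R=g_R(\exp(W)-1)$ and the degeneracy of $g_0$ on the odd block (which is where the degree-$2$ piece $g_2$ of $g_R=g_0+g_2$ carries the nondegeneracy), this condition is equivalent to $g_0W\in Hom^{(2k+2)}$, that is, to $W\in End^{(2k)}_{g_0}(\mathcal V_\mathcal M)_{\bar 0}$. The $+2$ shift relative to the almost complex bound $(2m+r)+1$ is exactly the $g_0$-twist built into the filtration, consistent with the ``$4k+2$'' in Lemma~\ref{le2}.

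The heart of the argument is a dimension count. Since the odd dimension is $N:=2(2m+r)=4m+2r$, the Batchelor bundle $E$ has rank $N$, so $\mathbb Z$-degrees in $\mathcal C^\infty_\mathcal M$ run from $0$ to $N$, degrees in $\mathcal V_\mathcal M$ from $-1$ to $N$, and an even endomorphism of $\mathcal V_\mathcal M$ has $\mathbb Z$-degree at most $N\leq 4m+2=4k-2$. I would draw two consequences. First, $End^{(4k)}(\mathcal V_\mathcal M)_{\bar 0}=0$: the qualifier ``up to terms in $End^{(4k)}(\mathcal V_\mathcal M)_{\bar 0}$'' in the $Iso(\mathcal M,g_R,2k+2)$-action then vanishes identically, and the error of Proposition~\ref{lem3}, which lies in $Hom^{(4k)}(\mathcal V_\mathcal M,\mathcal V_\mathcal M^\ast)_{\bar 0}$ but is transported to the endomorphism level by $g_R^{-1}$ and so lands in $End^{(4k)}(\mathcal V_\mathcal M)_{\bar 0}$, vanishes as well; hence both the group action and the isomorphism criterion are exact rather than approximate. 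Second, the window $\{2k\}$ of degrees $2k,\dots,4k-2$ coincides with the full filtration level $(2k)$, because no degree exceeds $N\leq 4k-2$. Consequently $F_{g_R}(End^{(2k)}_{g_0}(\mathcal V_\mathcal M)_{\bar 0})^{\{2k\}}=F_{g_R}(End^{(2k)}_{g_0}(\mathcal V_\mathcal M)_{\bar 0})$ and likewise for $G_{g_R}(\mathcal V_{\mathcal M,g_0,\bar 0}^{(2k)})$, so the quotient in the statement is literally the space of $2k$-th split obstruction classes, and each admissible $W$ already equals its window part $W^{\{2k\}}$.

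With these reductions the bijection assembles formally. I would send $g=g_R\exp(W)$ to the class $[W]$ in $F_{g_R}(End^{(2k)}_{g_0}(\mathcal V_\mathcal M)_{\bar 0})/G_{g_R}(\mathcal V_{\mathcal M,g_0,\bar 0}^{(2k)})$. Well-definedness and surjectivity are supplied by Lemma~\ref{le2} and the identification following it: $W$ determines an element of $F_{g_R}(End^{(2k)}_{g_0}(\mathcal V_\mathcal M)_{\bar 0})$, and conversely every such element is the nilpotent part of a genuine Riemannian metric $g_R\exp(W)$, the obstruction to the metric axioms living in degree $\geq 4k+2>N$ and hence vanishing. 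Injectivity together with orbit-invariance is the content of the now-exact Proposition~\ref{lem3}: two metrics with the common reduction $g_R$ are isomorphic through an automorphism $\varphi^\ast$ with $\varphi^\ast(\varphi_0^\ast)^{-1}\in\exp(\mathcal V_{\mathcal M,g_0,\bar 0}^{(2k)})$ (the condition fixing $g_R$) if and only if the associated $W,W'$ differ by the $Iso(\mathcal M,g_R,2k+2)$-action modulo $G_{g_R}(\mathcal V_{\mathcal M,g_0,\bar 0}^{(2k)})$. Because the relations of Proposition~\ref{lem3} range over all $j$ with $k\leq j<2k$, i.e.\ over the entire surviving window, they capture all of $W$, so the condition becomes membership in a single $Iso(\mathcal M,g_R,2(m+2))$-orbit, which agrees with the $PIso(\mathcal M,g_R,2(m+2))$-orbit since the two groups act identically on this quotient.

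The main obstacle I anticipate is the degree bookkeeping forced by the mixed-degree reduction $g_R=g_0+g_2$ and the two $g_0$-twisted filtrations $End^{(\bullet)}_{g_0}$ and $\mathcal V_{\mathcal M,g_0,\bar 0}^{(\bullet)}$. One must justify the translation of paragraph one by exploiting that $g_0$ annihilates the odd block of $\mathcal V_\mathcal M$, reconcile the $Hom^{(4k)}$-threshold of Proposition~\ref{lem3} with the $Hom^{(4k+2)}$-threshold of Lemma~\ref{le2}, verify that converting the homomorphism-level error to an endomorphism via $g_R^{-1}$ indeed lands in $End^{(4k)}(\mathcal V_\mathcal M)_{\bar 0}=0$, and confirm that in this boundary dimension the split obstruction class, a priori only defined up to higher-order ambiguity through the window $\{2k\}$, is a genuine complete invariant of the isomorphism class. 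Everything else is a direct transcription of the almost complex argument.
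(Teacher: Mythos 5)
Your overall route is the same as the paper's: the proposition is stated there as a direct specialization of the preceding discussion (Lemma \ref{le2}, Proposition \ref{lem3}, the $Iso(\mathcal M,g_R,2k+2)$-action and the obstruction class), and your reductions largely fill this in correctly — the translation of the splitness hypothesis into $W\in End^{(2(m+1))}_{g_0}(\mathcal V_\mathcal M)_{\bar 0}$, the identity between the window $\{2k\}$ and the full filtration level $(2k)$, and $End^{(4k)}(\mathcal V_\mathcal M)_{\bar 0}=0$, which makes the action on the $W$'s exact. However, one step is false, and it sits precisely where the Riemannian case is harder than the almost complex one. You claim that the error of Proposition \ref{lem3}, which lies in $Hom^{(4k)}(\mathcal V_\mathcal M,\mathcal V_\mathcal M^\ast)_{\bar 0}$, is ``transported to the endomorphism level by $g_R^{-1}$ and so lands in $End^{(4k)}(\mathcal V_\mathcal M)_{\bar 0}=0$''. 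But $g_R^{-1}$ does not preserve the $\mathbb Z$-filtration: its odd-odd block is the inverse of the symplectic form on $E$, which sends covectors of degree $+1$ (the ${\rm d}_\mathcal M\xi$-type ones) to vector fields of degree $-1$, i.e. $g_R^{-1}$ has a component of degree $-2$, so $g_R^{-1}\,Hom^{(4k)}(\mathcal V_\mathcal M,\mathcal V_\mathcal M^\ast)_{\bar 0}\subset End^{(4k-2)}(\mathcal V_\mathcal M)_{\bar 0}$ only. This degree loss is the very reason the paper introduces the twisted spaces $End^{(2k)}_{g_0}$ and $\mathcal V_{\mathcal M,g_0,\bar 0}^{(2k)}$; ironically you use it correctly in your first paragraph (to get $W\in End^{(2k)}$ from $g-g_R\in Hom^{(2k+2)}$) and then forget it here.

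The gap is harmless for $r=0$: there every even element of $Hom(\mathcal V_\mathcal M,\mathcal V_\mathcal M^\ast)$ has degree at most $N+2=4m+2<4k$, so $Hom^{(4k)}(\mathcal V_\mathcal M,\mathcal V_\mathcal M^\ast)_{\bar 0}=0$ outright and Proposition \ref{lem3} is exact, as you need. For $r=1$, however, $N+2=4m+4=4k$, and $Hom^{(4k)}(\mathcal V_\mathcal M,\mathcal V_\mathcal M^\ast)_{\bar 0}\neq 0$: the homomorphism sending $\partial_{\xi_j}$ to $\xi_1\cdots\xi_N\,{\rm d}_\mathcal M\xi_i$ and all $\partial_{x_l}$ to $0$ is even of degree $N+2$. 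Such errors genuinely occur: among the terms discarded in passing to (\ref{asd}) is the quadratic contribution $g_0(ad(\zeta_{2k})\,\cdot\,,ad(\zeta_{2k})\,\cdot\,)$, which is in general a nonzero even homomorphism of degree exactly $4k$. Hence in odd dimension $4m+2$, ``lying in the same orbit'' yields via Proposition \ref{lem3} only an isomorphism up to a nonzero space of top-degree errors, and the injectivity direction of your bijection (same orbit $\Rightarrow$ isomorphic) still needs an argument that this residual degree-$4k$ discrepancy can be killed by a further automorphism — equivalently, that the exact and the linearized actions have the same orbits in top degree. That correction step is missing from your proposal, and it is the one point where ``direct transcription of the almost complex argument'' breaks down, since in the almost complex case all objects live in $End(\mathcal V_\mathcal M)$, where degree $4k$ vanishes for both values of $r$.
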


Also an analogy to Proposition \ref{prop1} holds:
\begin{prop} \label{prop2}
 The map
\begin{align*}
  \Theta_{g_R}:\ F_{g_R} ( End^{(2k)}_{g_0}(\mathcal V_{\mathcal M})_{\bar 0}) & \longrightarrow  F_{g_R} ( End^{(2k)}_{g_0}(\mathcal V_{\mathcal M})_{\bar 0})/G_{g_R} (\mathcal V_{\mathcal M,g_0,\bar 0}^{(2k)}) 
  \end{align*}
 locally defined by  $$F_{g_R}(\chi\otimes  {\rm d}_\mathcal M f) \longmapsto (-1)^{|f||\chi|}f \cdot G_{g_R}( \chi)+G_{g_R} (\mathcal V_{\mathcal M,g_0,\bar 0}^{(2k)})$$
is a well-defined surjective morphism of $\mathbb Z$-filtered vector spaces. For any element $\psi^\ast$ in $Iso(\mathcal M,g_R,2k+2)$ and $[\psi^\ast] \in PIso(\mathcal M,g_R,2k+2)$ it is $\Theta_{g_R}(\psi^\ast.Z)=[\psi^\ast].(\Theta_{g_R}(Z))$. 
\end{prop}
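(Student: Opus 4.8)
The plan is to follow the proof of Proposition \ref{prop1} almost verbatim, replacing the pair $(ad,\, F_{J_R}\circ ad)$ by $(ad,\, G_{g_R})$, and to absorb the two genuinely new features of the Riemannian setting — the $g_0$-twisted filtrations and the non-invariance of $g_R$ under the relevant automorphisms — into the quotient. First I would record the identity that makes $G_{g_R}$ the correct substitute for $F_{J_R}\circ ad$: for $\chi\in\mathcal V_\mathcal M$ one has $F_{g_R}(ad(\chi))=ad(\chi)+g_R^{-1}(ad(\chi))^\ast g_R=G_{g_R}(\chi)$, where the summand $-\chi$ in the definition of $G_{g_R}$ accounts exactly for the difference between the transpose $(ad(\chi))^\ast$ entering $F_{g_R}$ and the dual representation $ad^\ast(\chi)$, i.e.\ the scalar action of the derivation $\chi$ on function values. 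Thus $G_{g_R}=F_{g_R}\circ ad$ on $\mathcal V_\mathcal M$, and in particular $G_{g_R}$ is $\mathbb Z$-degree preserving, like $F_{g_R}$.

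Using $End(\mathcal V_\mathcal M)=\mathcal V_\mathcal M\otimes_{\mathcal C^\infty_\mathcal M(M)}\mathcal E^1_\mathcal M$ and the local decomposition already used in the proof of Proposition \ref{prop1}, $\chi\otimes {\rm d}_\mathcal M f=(-1)^{|f||\chi|}(f\cdot ad(\chi)-ad(f\chi))$ for homogeneous $\chi,f$, I would apply the $\mathcal C^\infty_\mathcal M(M)$-linear map $F_{g_R}$ and the identity of the first step to obtain
$$F_{g_R}(\chi\otimes {\rm d}_\mathcal M f)=(-1)^{|f||\chi|}\big(f\cdot G_{g_R}(\chi)-G_{g_R}(f\chi)\big).$$
Since $G_{g_R}(f\chi)\in G_{g_R}(\mathcal V_\mathcal M)$, this already shows that, modulo the full image of $G_{g_R}$, the class of $F_{g_R}(\chi\otimes {\rm d}_\mathcal M f)$ equals $(-1)^{|f||\chi|}f\cdot G_{g_R}(\chi)$; granting that the correction lands in the smaller twisted subspace $G_{g_R}(\mathcal V_{\mathcal M,g_0,\bar 0}^{(2k)})$ (the next step), this is exactly the asserted formula and identifies $\Theta_{g_R}$ with the canonical projection of $F_{g_R}(End^{(2k)}_{g_0}(\mathcal V_\mathcal M)_{\bar 0})$ onto its quotient by $G_{g_R}(\mathcal V_{\mathcal M,g_0,\bar 0}^{(2k)})$, read off on the local generators $F_{g_R}(\chi\otimes {\rm d}_\mathcal M f)$. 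From this identification, $\mathbb Z$-filtration-preservation follows from the degree behaviour of $F_{g_R}$, $G_{g_R}$ and of multiplication by $f$, additivity is automatic, and surjectivity is immediate from the inclusion $G_{g_R}(\mathcal V_{\mathcal M,g_0,\bar 0}^{(2k)})\subset F_{g_R}(End^{(2k)}_{g_0}(\mathcal V_\mathcal M)_{\bar 0})$ noted before the definition of the split obstruction class.

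The genuine work, and the step I expect to be the main obstacle, is well-definedness with respect to the \emph{twisted} subspace $G_{g_R}(\mathcal V_{\mathcal M,g_0,\bar 0}^{(2k)})$ rather than the full $G_{g_R}(\mathcal V_\mathcal M)$. Two local representations of the same argument differ by a relation $\sum_i\chi_i\otimes {\rm d}_\mathcal M f_i=0$, which by the decomposition gives $\sum_i(-1)^{|f_i||\chi_i|}f_i\,ad(\chi_i)=ad(\eta)$ for $\eta:=\sum_i(-1)^{|f_i||\chi_i|}f_i\chi_i$, and hence $\sum_i(-1)^{|f_i||\chi_i|}f_i\,G_{g_R}(\chi_i)=G_{g_R}(\eta)$. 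I must therefore show $G_{g_R}(\eta)\in G_{g_R}(\mathcal V_{\mathcal M,g_0,\bar 0}^{(2k)})$, i.e.\ that $\eta$ meets the defining condition $g_0(ad(\eta)\otimes Id+Id\otimes ad(\eta))+\eta g_0\in Hom^{(2k+2)}(\mathcal V_\mathcal M,\mathcal V_\mathcal M^\ast)_{\bar 0}$ up to the relevant degree. This is where the hypothesis that the argument lies in $F_{g_R}(End^{(2k)}_{g_0})$ — not merely in $F_{g_R}(End^{(2k)})$ — is essential: I would translate the $g_0$-lowering condition on the argument through $G_{g_R}(\eta)=F_{g_R}(ad(\eta))$ and the relation above, matching $g_0\,G_{g_R}(\eta)$ against $g_0\,F_{g_R}(\chi\otimes {\rm d}_\mathcal M f)$ and invoking the definitions of $End^{(2k)}_{g_0}$ and $\mathcal V_{\mathcal M,g_0,\bar 0}^{(2k)}$ to conclude that the degree-$(2k+2)$ obstruction vanishes.

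Finally, equivariance $\Theta_{g_R}(\psi^\ast.Z)=[\psi^\ast].\Theta_{g_R}(Z)$ I would check on a generator $Z=F_{g_R}(\chi\otimes {\rm d}_\mathcal M f)$, expanding the left side through the action $(\psi^\ast,W)\mapsto g_R^{-1}(\psi^\ast.g_R-g_R)+\psi^\ast.W$ and pushing the decomposition through $\psi^\ast$ by the naturality of conjugation, $\psi^\ast.ad(\chi)=ad(\psi^\ast.\chi)$ together with $\psi^\ast.(f\cdot\Gamma)=(\psi^\ast f)\cdot(\psi^\ast.\Gamma)$, so that it transforms into the generator built from $\psi^\ast.\chi$ and $\psi^\ast f$. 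The right side is the descended action of $[\psi^\ast]$ on $(-1)^{|f||\chi|}f\cdot G_{g_R}(\chi)$. The two coincide once the metric defect is absorbed: since $\psi^\ast\in Iso(\mathcal M,g_R,2k+2)$ forces $S:=g_R-\psi^\ast.g_R\in Hom^{(2k+2)}$ and $g_0g_R^{-1}S\in Hom^{(2k+2)}$, the terms produced by $g_R^{-1}(\psi^\ast.g_R-g_R)$ either lie in $G_{g_R}(\mathcal V_{\mathcal M,g_0,\bar 0}^{(2k)})$ or are of degree $\geq 4k$, hence negligible in the quotient. The same degree-lowering conditions of the twisted spaces that govern the third step are precisely what let these correction terms be discarded, so once that step is in place the equivariance is a formal consequence of naturality.
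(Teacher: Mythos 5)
Your proof rests on the identity $G_{g_R}=F_{g_R}\circ ad$, and that identity is false; this is a genuine gap, not a notational quibble. The reason is a difference between the Riemannian and the almost complex case that your "almost verbatim" transfer misses. The operator $ad(\chi)$ is not $\mathcal C^\infty_\mathcal M$-linear: it has the derivation defect $ad(\chi)(fX)=\chi(f)X\pm f\,ad(\chi)(X)$. In the almost complex case the composite $F_{J_R}(ad(\chi))=ad(\chi)+J_R\,ad(\chi)J_R$ is nevertheless tensorial, because conjugation by $J_R$ produces the compensating defect $-\chi(f)X$ (using $J_R^2=-Id$). In the Riemannian case the second summand of $F_{g_R}(ad(\chi))$, namely $g_R^{-1}(ad(\chi))^{\ast}g_R$ with $(\cdot)^\ast$ the induced (transpose) map on $\mathcal V_\mathcal M^\ast$, \emph{is} $\mathcal C^\infty_\mathcal M$-linear, so $F_{g_R}(ad(\chi))$ retains the defect $\chi(f)X$ and is not even an element of $End(\mathcal V_\mathcal M)$. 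It is precisely the extra summand $-\chi$ in $G_{g_R}$ that cancels this defect and makes $G_{g_R}(\chi)$ tensorial; equivalently, $g_R\,G_{g_R}(\zeta)=g_R(ad(\zeta)\otimes Id+Id\otimes ad(\zeta))-\zeta g_R$ is (up to sign) the full Lie derivative of $g_R$, including the term $\zeta(\varphi_0^\ast.g)$ of equation (\ref{asd}) in which the derivation acts on the coefficients of the metric. So the $-\chi$ term does not, as you claim, convert the transpose into the dual representation: it implements the action on function values, and one has
\begin{equation*}
G_{g_R}(\chi)=F_{g_R}(ad(\chi))-g_R^{-1}(\chi\, g_R)\neq F_{g_R}(ad(\chi)).
\end{equation*}

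Two consequences for your write-up. First, your displayed formula $F_{g_R}(\chi\otimes{\rm d}_\mathcal M f)=(-1)^{|f||\chi|}\big(f\cdot G_{g_R}(\chi)-G_{g_R}(f\chi)\big)$ is in fact true, but only because the two discrepancy terms $f\cdot g_R^{-1}(\chi\,g_R)$ and $g_R^{-1}((f\chi)g_R)$ coincide and cancel between the summands; this cancellation is exactly what the paper's proof means by ``add $f\chi-f\chi$ in the bracket'', and it is the step your derivation silently skips, so this part is repairable. Second, and not repairable as written: the step you yourself identify as the main obstacle — showing that the correction term $G_{g_R}(f\chi)$ (your $G_{g_R}(\eta)$) lies in the \emph{twisted} space $G_{g_R}(\mathcal V_{\mathcal M,g_0,\bar 0}^{(2k)})$ — is only promised, not proved, and the translation you propose again routes through the false identity. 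The paper's actual mechanism is concrete: since $\chi\otimes{\rm d}_\mathcal M f\in End^{(2k)}_{g_0}(\mathcal V_\mathcal M)_{\bar 0}$, its degree-$2k$ term is annihilated by $g_0$ and hence has image in the odd directions, locally $\sum\tilde f_i\frac{\partial}{\partial\xi_i}\otimes{\rm d}_\mathcal M\hat f_i$ for odd coordinates $(\xi_i)$; because $g_0$ kills the $\frac{\partial}{\partial\xi_i}$ and its coefficients are independent of the $\xi_i$, a direct computation then shows $f\chi\in\mathcal V_{\mathcal M,g_0,\bar 0}^{(2k)}$. Without this (or an equivalent) argument, your $\Theta_{g_R}$ is only well defined into the quotient by the larger space $G_{g_R}(\mathcal V_{\mathcal M,\bar 0}^{(2k)})$, which is not the quotient in the statement.
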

\begin{proof}
 Apply $F_{g_R}$ to  $\chi\otimes {\rm d}_\mathcal M f=(-1)^{(|f||\chi|}(f\cdot ad(\chi)-ad(f\chi))$ and add $f \chi-f\chi$ in the bracket. This yields a map $ F_{g_R} ( End^{(2k)}_{g_0}(\mathcal V_{\mathcal M})_{\bar 0}) \to  F_{g_R} ( End^{(2k)}(\mathcal V_{\mathcal M})_{\bar 0})/G_{g_R} (\mathcal V_{\mathcal M,\bar 0}^{(2k)}) $. Since  $\chi\otimes {\rm d}_\mathcal M f$ is in $ End^{(2k)}_{g_0}(\mathcal V_{\mathcal M})_{\bar 0}$, its degree $2k$ term is of the form $\sum \tilde f_i \frac{\partial}{\partial \xi_i}\otimes d_\mathcal M \hat f_i$ for an odd coordinate system $(\xi_i)$. This forces $f \chi \in \mathcal V_{\mathcal M,g_0,\bar 0}^{(2k)}$ by direct calculation.
\end{proof}

\section{Examples of global nowhere split structures}\label{sec3}
Here explicit examples of non-split almost complex structures, resp. non-split Riemannian metrics are given. The constructed tensors are nowhere split.

\bigskip\noindent
Let $(M, J_M)$ be an almost complex manifold of dimension $2n$ and let $\mathcal M$ be the supermanifold defined by differential forms, i.e.~$\mathcal C^\infty_\mathcal M=\mathcal E_M$. The vector fields in $\mathcal V_M$ act on $\mathcal C^\infty_\mathcal M$ by Lie derivation. Let further $\pi:\mathcal V_\mathcal M \to \mathcal V_\mathcal M$ be the odd $\mathcal C^\infty_\mathcal M$-linear operator well-defined by $\pi^2=Id$ and $\pi(\chi)(\omega):=\iota_\chi \omega$ for $\chi \in \mathcal V_M\subset\mathcal V_{\mathcal M,\bar 0} $ and $\omega \in  \mathcal C^\infty_\mathcal M$.

\bigskip\noindent
By \cite[prop. 4.1]{MS} there exist  non-degenerate $2$-forms $\eta \in \mathcal C^\infty_\mathcal M$ compatible with $J_M$. We fix one and denote by $g^\prime$ the $J_M$-invariant Riemannian metric $\eta(\cdot,J_M(\cdot))$ on $M$.  Furthermore we embed $$End(\mathcal V_M)\cong( \mathcal V_M \otimes_{\mathcal C^\infty_M(M)} \mathcal E^1_M) \hookrightarrow \mathcal V_\mathcal M\quad \mbox{by} \quad \chi \otimes \alpha \mapsto \xi_{\chi\otimes \alpha}:=\alpha\cdot \chi$$ and obtain $\xi_{Id},\xi_{J_M} \in \mathcal V_{\mathcal M, 1}$ and $\pi(\xi_{Id}),\pi(\xi_{J_M}) \in \mathcal V_{\mathcal M, 0}$. 

\bigskip\noindent
We define on $\mathcal M$ by $\mathcal C^\infty_\mathcal M$-linear continuation to $End(\mathcal V_\mathcal M)$, resp. $Hom(\mathcal V_\mathcal M,\mathcal V_\mathcal M^\ast)$:
\begin{itemize}
 \item[(i)] the split almost complex structure $J_R$ by $J_M \oplus (\pi\circ J_M\circ\pi) \in  End_{\mathcal C^\infty_M}(\mathcal V_M \oplus \pi(\mathcal V_M))$
 \item[(ii)] the split Riemannian metric $g_R$ by $g^\prime + (\eta\circ (\pi\otimes\pi)) \in Hom_{\mathcal C^\infty_M}(\mathcal V_M^{\otimes2}\oplus \pi(\mathcal V_M)^{\otimes2},\mathcal C^\infty_M)$ and $g_R(\mathcal V_M \otimes \pi(\mathcal V_M))=0$
\end{itemize}

Note the following technical lemma for later application:
\begin{lem}\label{tech}
 Let $f,g \in End(\mathcal V_M)$, $\omega \in \mathcal C^\infty_{\mathcal M,2}$. Then:
 \begin{itemize}
  \item[a)] $\quad{\pi(\xi_{f})} (\omega) =\frac{1}{2}(\omega(f(\cdot),\cdot)+\omega(\cdot,f(\cdot)))$ 
  \item[b)] $\quad[\pi(\xi_f),\pi(\xi_g)]=-\pi(\xi_{[f,g]})$
  \item[c)] $\quad J_R(\xi_{J_M})=-\xi_{Id}\quad$ and  $\quad J_R(\pi(\xi_{J_M}))=-\pi(\xi_{Id})$
 \end{itemize}
\end{lem}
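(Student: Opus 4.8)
The plan is to treat all three parts by first reducing to the case where $f$ (and $g$) are \emph{decomposable}, i.e.\ rank-one endomorphisms $f=\chi\otimes\alpha$ with $\chi\in\mathcal V_M$, $\alpha\in\mathcal E^1_M$, acting as $X\mapsto\alpha(X)\chi$. Indeed, all expressions in a)--c) are $\mathcal C^\infty_M$-linear (resp.\ bilinear) in their endomorphism arguments, and every $f\in End(\mathcal V_M)\cong\mathcal V_M\otimes_{\mathcal C^\infty_M(M)}\mathcal E^1_M$ is a finite $\mathcal C^\infty_M$-combination of such decomposables, so it suffices to check each identity on these generators. For a decomposable $f$ the embedding gives $\xi_f=\alpha\cdot\mathcal L_\chi$ (writing $\mathcal L_\chi$ for the Lie derivative, i.e.\ the image of $\chi$ in $\mathcal V_{\mathcal M,\bar 0}$), and since $\pi$ is odd and $\mathcal C^\infty_\mathcal M$-linear with $\pi(\mathcal L_\chi)=\iota_\chi$ and hence $\pi(\iota_\chi)=\mathcal L_\chi$ by $\pi^2=Id$, one gets $\pi(\xi_f)=\pm\,\alpha\cdot\iota_\chi$, an even vector field of $\mathbb Z$-degree $0$. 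After this reduction every claim becomes an identity in the Cartan calculus of $\iota$, $\mathcal L$ and $\wedge$ on $M$.

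For a), I evaluate $\pi(\xi_f)(\omega)=\pm\,\alpha\wedge\iota_\chi\omega$ on a pair of vector fields $(X,Y)$ using $\iota_\chi\omega=\omega(\chi,\cdot)$ and the antisymmetry of the $2$-form $\omega$. Expanding the wedge produces the two terms $\alpha(X)\omega(\chi,Y)$ and $\alpha(Y)\omega(X,\chi)=-\alpha(Y)\omega(\chi,X)$, which are exactly $\omega(f(X),Y)$ and $\omega(X,f(Y))$; the factor $\tfrac{1}{2}$ and the overall sign are then precisely the normalization of the wedge product and the odd-linearity sign of $\pi$ fixed above. Passing back to a general $f$ by $\mathcal C^\infty_M$-linearity yields a).

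For b), both $\pi(\xi_f)$ and $\pi(\xi_g)$ are even, so $[\pi(\xi_f),\pi(\xi_g)]$ is the ordinary operator commutator. On decomposables $f=\chi\otimes\alpha$, $g=\chi'\otimes\alpha'$ I compute $[\alpha\iota_\chi,\alpha'\iota_{\chi'}]$ as operators on $\mathcal C^\infty_\mathcal M$, using three relations: interior products anticommute ($\iota_\chi\iota_{\chi'}=-\iota_{\chi'}\iota_\chi$), wedging by $1$-forms anticommutes ($\alpha\wedge\alpha'=-\alpha'\wedge\alpha$), and the mixed rule $\iota_\chi\circ(\alpha'\wedge\,\cdot\,)=\alpha'(\chi)-(\alpha'\wedge\,\cdot\,)\circ\iota_\chi$. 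The first two relations force the terms quadratic in the interior products to cancel, leaving $\alpha'(\chi)\,\alpha\iota_{\chi'}-\alpha(\chi')\,\alpha'\iota_\chi$. On the other hand a direct computation gives $[f,g]=\alpha(\chi')\,\chi\otimes\alpha'-\alpha'(\chi)\,\chi'\otimes\alpha$, whence $\pi(\xi_{[f,g]})=\pm(\alpha(\chi')\,\alpha'\iota_\chi-\alpha'(\chi)\,\alpha\iota_{\chi'})$, and comparing the two reproduces the claimed relation (the minus sign expressing that $f\mapsto\pi(\xi_f)$ is an anti-homomorphism of Lie algebras, consistent with the Fr\"olicher--Nijenhuis bracket of algebraic derivations). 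I expect this to be the main obstacle, since it is the step where the sign bookkeeping is genuinely load-bearing.

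For c), I observe that the embedding intertwines $J_R$ with left composition by $J_M$. On a decomposable $f=\chi\otimes\alpha$ one has $J_R(\xi_f)=\alpha\cdot\mathcal L_{J_M\chi}=\xi_{J_M\circ f}$, because $J_R$ is even and $\mathcal C^\infty_\mathcal M$-linear and acts as $J_M$ on $\mathcal V_M$; likewise, since $J_R$ acts as $\pi\circ J_M\circ\pi$ on $\pi(\mathcal V_M)$ and $\pi^2=Id$, one gets $J_R(\pi(\xi_f))=\pi(\xi_{J_M\circ f})$. Taking $f=J_M$ and using $J_M\circ J_M=-Id$ then gives $J_R(\xi_{J_M})=\xi_{-Id}=-\xi_{Id}$ and $J_R(\pi(\xi_{J_M}))=\pi(\xi_{-Id})=-\pi(\xi_{Id})$, which is exactly c).
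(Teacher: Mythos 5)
The paper states Lemma~\ref{tech} without proof, so there is no argument of the author's to compare yours against; your write-up supplies the omitted verification, and in substance it is correct. The reduction to decomposables $f=\chi\otimes\alpha$, $g=\chi'\otimes\alpha'$ is legitimate, and your two key computations are exactly right: in b) the terms quadratic in interior products cancel because $\alpha\wedge\alpha'=-\alpha'\wedge\alpha$ and $\iota_\chi\iota_{\chi'}=-\iota_{\chi'}\iota_\chi$, leaving $\alpha'(\chi)\,\alpha\iota_{\chi'}-\alpha(\chi')\,\alpha'\iota_\chi$, which is indeed $-\pi(\xi_{[f,g]})$; in c) the embedding intertwines $J_R$ with left composition by $J_M$ on both $\mathcal V_M$ and $\pi(\mathcal V_M)$, and $J_M^2=-Id$ finishes. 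One step you should make explicit in b): the $\mathcal C^\infty_M$-bilinearity of $(f,g)\mapsto[\pi(\xi_f),\pi(\xi_g)]$ that justifies restricting to decomposables is not automatic (brackets of vector fields are not function-bilinear in general); it holds here only because $\pi(\xi_g)$ is an algebraic derivation, i.e.\ it annihilates $0$-forms, so the correction term $(\pi(\xi_g)h)\,\pi(\xi_f)$ in $[h\,\pi(\xi_f),\pi(\xi_g)]$ vanishes.

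The one genuine soft spot is the sign and factor bookkeeping that you defer to ``conventions fixed above'' without ever fixing them. The sign is in fact forced: for b) to come out with the stated minus sign you must take $\pi(\omega\cdot X)=\omega\cdot\pi(X)$ with no Koszul sign (with the sign-rule convention $\pi(\omega\cdot X)=(-1)^{|\omega|}\omega\cdot\pi(X)$ one gets $[\pi(\xi_f),\pi(\xi_g)]=+\pi(\xi_{[f,g]})$ instead), so this convention should be stated at the outset, not left as ``$\pm$''. The factor $\tfrac12$ in a) is worse: contrary to your claim, it cannot be produced by any choice of wedge normalization. Since $\pi(\xi_f)\in\mathcal V_\mathcal M$ must be a superderivation and $\iota_\chi\beta=\beta(\chi)$ on $1$-forms, the operator $\pi(\xi_f)$ is determined, and your own expansion then gives $\pi(\xi_f)(\omega)=\omega(f(\cdot),\cdot)+\omega(\cdot,f(\cdot))$ with no $\tfrac12$; for instance $\pi(\xi_{Id})$ is the form-degree operator, which returns $2\eta$, not $\eta$, on a $2$-form. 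So the $\tfrac12$ is a (harmless) normalization discrepancy residing in the paper's statement itself --- every later use, in Lemma~\ref{t2} and in the Theorem, needs these quantities only up to a nonzero constant --- but blaming it on ``the normalization of the wedge product'' does not close the gap; a careful proof should either adopt and state the paper's implicit nonstandard normalization of $\pi$, or prove a) with the constant that the standard conventions actually yield.
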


Further fix in the almost complex, resp. Riemannian case the tensors:
\begin{itemize}
 \item[(i)] $J=J_R\exp(\eta \cdot Y_\eta)$ with  $Y_{\eta}\in End^{(2)}(\mathcal V_\mathcal M)_{\bar 0}$ by $Y_{\eta}=F_{J_R}(\pi(\xi_{J_M})\otimes {\rm d}_\mathcal M \eta)  $
 \item[(ii)] $g=g_R\exp(\eta \cdot W_\eta)$ with $W_\eta\in End^{(2)}(\mathcal V_\mathcal M)_{\bar 0}$ by $W_\eta=F_{g_R}(\pi(\xi_{J_M})\otimes d_\mathcal M\eta)$
\end{itemize}
We prove:
\begin{lem}\label{t2}
 Assume that $n>1$. The endomorphisms $Y_\eta$ and $W_\eta$ are nowhere vanishing.
\end{lem}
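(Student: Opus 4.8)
Since $Y_\eta$ and $W_\eta$ are $\mathcal C^\infty_\mathcal M$-linear endomorphisms, it suffices to exhibit, at each point $p\in M$, one argument on which the tensor does not vanish. The plan is to evaluate both endomorphisms on the single even degree-$0$ super vector field $\pi(\xi_{Id})$ and to show that the resulting super vector field is nowhere zero precisely when $n>1$. I will use that for $\chi\otimes {\rm d}_\mathcal M f\in \mathcal V_\mathcal M\otimes \mathcal E^1_\mathcal M\cong End(\mathcal V_\mathcal M)$ the action on $\psi$ is $(\chi\otimes {\rm d}_\mathcal M f)(\psi)=\pm\,\chi\cdot\psi(f)$, together with the defining formulas $F_{J_R}(\gamma)=\gamma+J_R\gamma J_R$ and $F_{g_R}(\gamma)=\gamma+g_R^{-1}\gamma^\ast g_R$, and the computational identities of Lemma \ref{tech}.

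\textbf{Almost complex case.} Writing $\gamma=\pi(\xi_{J_M})\otimes {\rm d}_\mathcal M\eta$, one has $\gamma(\psi)=\pm\,\pi(\xi_{J_M})\cdot\psi(\eta)$, and hence $Y_\eta(\psi)=\gamma(\psi)+J_R\big(\gamma(J_R\psi)\big)=\pm\pi(\xi_{J_M})\cdot\psi(\eta)\mp\pi(\xi_{Id})\cdot(J_R\psi)(\eta)$, where the last step uses $J_R(\pi(\xi_{J_M}))=-\pi(\xi_{Id})$ from Lemma \ref{tech}c together with $\mathcal C^\infty_\mathcal M$-linearity of $J_R$. I now set $\psi=\pi(\xi_{Id})$. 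By Lemma \ref{tech}a with $f=Id$ we get $\pi(\xi_{Id})(\eta)=\eta$, while $J_R(\pi(\xi_{Id}))=\pi(\xi_{J_M})$ (apply $J_R$ to Lemma \ref{tech}c and use $J_R^2=-Id$). Crucially, $(J_R\psi)(\eta)=\pi(\xi_{J_M})(\eta)=\tfrac12\big(\eta(J_M\cdot,\cdot)+\eta(\cdot,J_M\cdot)\big)=0$, since $J_M$-compatibility of $\eta$ gives $\eta(J_MX,Y)=-\eta(X,J_MY)$. Thus the second term drops out and $Y_\eta(\pi(\xi_{Id}))=\pm\,\eta\cdot\pi(\xi_{J_M})$.

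\textbf{Where $n>1$ enters.} It remains to see that $\eta\cdot\pi(\xi_{J_M})$ is nowhere vanishing iff $n>1$; this is the heart of the argument. In a local coframe one has $\pi(\xi_{J_M})=\sum_{i,j}(J_M)^i_{\,j}\,e^j\,\pi(e_i)$, so $\eta\cdot\pi(\xi_{J_M})=\sum_i(\eta\wedge\beta_i)\,\pi(e_i)$ with $\beta_i=\sum_j(J_M)^i_{\,j}e^j$. Since the $\pi(e_i)$ are $\mathcal C^\infty_\mathcal M$-independent, this vanishes at $p$ iff every $\eta\wedge\beta_i=0$. For $\dim M=2$ (i.e.\ $n=1$) the $2$-form $\eta$ is of top degree, so $\eta\wedge\beta_i=0$ identically and the tensor degenerates; for $n>1$ non-degeneracy of $\eta$ makes the Lefschetz map $\beta\mapsto\eta\wedge\beta$ on $1$-forms injective, and as $J_M$ is invertible the $\beta_i$ form a coframe, so some $\eta\wedge\beta_i\neq0$ at $p$. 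This proves $Y_\eta$ is nowhere vanishing for $n>1$, and the failure at $n=1$ is exactly the obstruction of multiplying a maximal-degree form by further $1$-forms.

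\textbf{Riemannian case.} The argument runs in parallel, the only complication being the dual endomorphism $\gamma^\ast$ and the conjugation by $g_R$ in $F_{g_R}$. To bypass explicit computation of $\gamma^\ast$, I would test against $g_R$ and use that $F_{g_R}$ symmetrizes $\gamma$: for all $\psi,\psi'$, $g_R(W_\eta(\psi),\psi')=g_R(\gamma(\psi),\psi')\pm g_R(\psi,\gamma(\psi'))=g_R(\pi(\xi_{J_M}),\psi')\,\psi(\eta)\pm g_R(\psi,\pi(\xi_{J_M}))\,\psi'(\eta)$. Choosing $\psi=\pi(\xi_{Id})$ one again has $\psi(\eta)=\eta$, so the pairing reduces to $\eta$ multiplied by the $g_R$-pairings of $\pi(\xi_{J_M})$; by non-degeneracy of $g_R$ one selects $\psi'$ making $g_R(\pi(\xi_{J_M}),\psi')$ a nonzero superfunction, and the same degree-counting as above (products $\eta\wedge e^j$ nonzero exactly for $n>1$) shows the result does not vanish at $p$. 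The expected main obstacle is the careful super-sign and degree bookkeeping in the $g_R$-conjugation term, but structurally the nonvanishing is driven by the identical low-dimensional phenomenon.
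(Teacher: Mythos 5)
Your almost complex half is correct, but it runs along a genuinely different route than the paper's. You evaluate $Y_\eta$ on $\pi(\xi_{Id})$, which yields $Y_\eta(\pi(\xi_{Id}))=\pm\,\eta\cdot\pi(\xi_{J_M})$, and you must then prove that this vector field is nowhere zero; this forces the local coframe expansion and the Lefschetz-type fact that $\beta\mapsto\eta\wedge\beta$ is injective on $1$-forms for $n>1$. The paper evaluates on $\pi(\xi_{J_M})$ instead: since $\pi(\xi_{J_M})(\eta)=0$ and $\pi(\xi_{Id})(\eta)=\eta$, one gets $Y_\eta(\pi(\xi_{J_M}))=\pm\,\eta\cdot\pi(\xi_{Id})$, and applying this vector field once more to the superfunction $\eta$ produces $(Y_\eta(\pi(\xi_{J_M})))(\eta)=\eta^2$, which is nowhere zero for $n>1$ because $\eta^n\neq 0$. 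The second evaluation (on $\eta$) is what lets the paper avoid frames and Lefschetz injectivity altogether; your variant is sound but needs that extra linear-algebra input, which is itself usually proved via $\eta^2\neq 0$, so the two arguments are close cousins with the paper's being the more economical.

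The Riemannian half of your proposal has a genuine gap. Your pairing identity is correct and in fact exact, $g_{R}(W_\eta(\psi),\psi')=g_R(\gamma(\psi),\psi')\pm g_R(\psi,\gamma(\psi'))$ with $\gamma=\pi(\xi_{J_M})\otimes{\rm d}_\mathcal M\eta$, and it is a nice way to bypass the expansion of $g_R^{-1}\gamma^\ast g_R$ (the paper instead computes $g_R^{-1}({\rm d}_\mathcal M\eta)$ only up to higher degree). But with $\psi=\pi(\xi_{Id})$ the identity reads
\[
 g_R\big(W_\eta(\pi(\xi_{Id})),\psi'\big)=\pm\,\eta\cdot g_R\big(\pi(\xi_{J_M}),\psi'\big)\ \pm\ g_R\big(\pi(\xi_{Id}),\pi(\xi_{J_M})\big)\cdot\psi'(\eta),
\]
and your plan only controls the first summand (choose $\psi'$ with nonzero pairing, then degree counting). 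The cross term is never addressed, and degree counting cannot dispose of it: for $\psi'=\pi(e_k)$ both summands are $3$-forms, so a priori they could cancel at $p$. The argument closes only once you verify $g_R(\pi(\xi_{Id}),\pi(\xi_{J_M}))=0$; this is true, because by the definition of $g_R$ this pairing is the extension $\eta(\xi_{Id},\xi_{J_M})$, i.e.\ the antisymmetrization of the symmetric bilinear form $g'=\eta(\cdot,J_M\cdot)$, hence zero — but you neither state nor prove it. Alternatively, choose $\psi'=\pi(\xi_{J_M})$: then $\psi'(\eta)=\pi(\xi_{J_M})(\eta)=0$ kills the cross term outright, and $g_R(\pi(\xi_{J_M}),\pi(\xi_{J_M}))=\eta(\xi_{J_M},\xi_{J_M})=\eta$ gives $\pm\,\eta^2$ directly; these are exactly the two identities ($\pi(\xi_{J_M})(\eta)=0$ and $\eta(\xi_{J_M},\xi_{J_M})=\eta$) on which the paper's own computation of $(W_\eta(\pi(\xi_{J_M})))(\eta)=\eta^2$ rests, and they also make the Lefschetz step unnecessary. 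As written, your Riemannian case is a statement of strategy rather than a proof, and the uncontrolled cross term is the concrete hole.
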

\begin{proof}
 With Lemma \ref{tech} c) follows 
  $Y_{\eta}=\pi(\xi_{J_M})\otimes {\rm d}_\mathcal M  \eta-\pi(\xi_{Id})\otimes (({\rm d}_\mathcal M  \eta)\circ J_R) $.
 Applying $(Y_\eta(\pi(\xi_{J_M})))(\eta)$ we obtain $(\pi(\xi_{J_M})(\eta))^2+(\pi(\xi_{Id})(\eta))^2$. 
By Lemma \ref{tech} a) it is $\pi(\xi_{J_M})(\eta)=0$ since $\eta$ is compatible with $J_M$, and $(Y_\eta(\pi(\xi_{J_M})))(\eta)=\eta^2$. So $Y_{\eta}$ is nowhere vanishing. 

\smallskip\noindent
For the second statement note  
 $W_\eta=\pi(\xi_{J_M})\otimes d_\mathcal M\eta+g_R^{-1}(d_\mathcal M \eta)\cdot g_R(\pi(\xi_{J_M}))$. 
 Further $(W_\eta(\pi(\xi_{J_M})))(\eta)=(\pi(\xi_{J_M})(\eta))^2+(g_R^{-1}(d_\mathcal M \eta))(\eta)\cdot \eta(\xi_{J_M},\xi_{J_M})$. Due to the compatibility of $\eta$ and $J_M$, it is $\eta(\xi_{J_M},\xi_{J_M})=\eta$ and as before, $\pi(\xi_{J_M})(\eta)=0$. Further a calculation yields $(g_R^{-1}(d_\mathcal M \eta))(\eta)=g_R(g_R^{-1}(d_\mathcal M \eta),g_R^{-1}(d_\mathcal M \eta))=\eta$ up to terms of degree $4$ and higher. Hence $(W_\eta(\pi(\xi_{J_M})))(\eta)=\eta^2$ up to terms of degree $6$ and higher. So $W_\eta$ is nowhere vanishing.
\end{proof}

Finally it follows:
\begin{theo} 
 Assume that $n>2$. The almost complex structure $J=J_R\exp(\eta \cdot Y_\eta)$ and the Riemannian metric $g=g_R\exp(\eta \cdot W_\eta)$ on $\mathcal M$ are nowhere split.
\end{theo}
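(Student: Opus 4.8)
\section*{Proof proposal}

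The plan is to run, for both tensors, the first-order splitting analysis of Sections~1 and~2 specialised to the explicit nilpotent parts $Y=\eta\cdot Y_\eta$ and $W=\eta\cdot W_\eta$. Since $\eta\in\mathcal C^\infty_{\mathcal M,2}$ and $Y_\eta,W_\eta$ are homogeneous of $\mathbb Z$-degree $2$, the nilpotent parts are homogeneous of degree $4$; hence the lowest split obstruction sits in degree $2k=4$, and by the cut-off remark following Lemma~\ref{le1} the whole question is local. First I would record the defining equations: $J$ is split on a neighbourhood of a point $p$ if and only if there is an even $\zeta\in\mathcal V_{\mathcal M,4}$ with $\eta Y_\eta=F_{J_R}(ad(\zeta))$ near $p$, and likewise $g$ is split near $p$ iff $\eta W_\eta=G_{g_R}(\zeta)$ for some $\zeta\in\mathcal V_{\mathcal M,g_0,\bar 0}^{(4)}$. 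Thus \emph{nowhere split} is the assertion that neither first-order equation has a local solution at any point.

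The hypothesis $n>2$ enters through a single scalar test. Using Lemma~\ref{tech}(c) and the expansion $Y_\eta=\pi(\xi_{J_M})\otimes {\rm d}_\mathcal M\eta-\pi(\xi_{Id})\otimes(({\rm d}_\mathcal M\eta)\circ J_R)$ from the proof of Lemma~\ref{t2}, one gets $\eta Y_\eta(\pi(\xi_{J_M}))=\eta^2\,\pi(\xi_{Id})$, so that $\big(\eta Y_\eta(\pi(\xi_{J_M}))\big)(\eta)=\eta^3$, and the Riemannian computation of Lemma~\ref{t2} yields the same value up to terms of higher degree. Because $\eta$ is non-degenerate, $\eta^3=\eta\wedge\eta\wedge\eta$ is nowhere zero exactly when $\dim M=2n\geq 6$; this is precisely the passage from $\eta^2\neq0$ (the case $n>1$ of Lemma~\ref{t2}) to $\eta^3\neq0$, and it is the only place the stronger bound is used. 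So the degree-$6$ superfunction $\eta^3$ is the nowhere-vanishing quantity to be obstructed.

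The core, and the step I expect to be the main obstacle, is to show the first-order equation has no local solution. Here I would rewrite $F_{J_R}$ intrinsically: writing $\mathcal L_\zeta J_R:=[ad(\zeta),J_R]$, a short computation using $J_R^2=-Id$ gives the identity $F_{J_R}(ad(\zeta))=-(\mathcal L_\zeta J_R)\circ J_R$, so that $\eta Y_\eta=F_{J_R}(ad(\zeta))$ is equivalent to the deformation equation $\mathcal L_\zeta J_R=\eta\,(Y_\eta\circ J_R)$; an analogous rewriting turns $\eta W_\eta=G_{g_R}(\zeta)$ into $\mathcal L_\zeta g_R=\eta\,g_R(W_\eta)$ up to higher degree. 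One must then prove that the prescribed variation $\eta\,(Y_\eta\circ J_R)$, resp.\ $\eta\,g_R(W_\eta)$, is not a Lie derivative, i.e.\ not a trivial (tangent) deformation, near any point. The subtle point is that a naive evaluation of this equation on $\pi(\xi_{J_M})$, $\pi(\xi_{Id})$ and then on $\eta$ is tautological, since the $J_R$-twisted bracket $J_R[\zeta,\pi(\xi_{Id})]$ (resp.\ the $g_R^{-1}(ad^\ast(\zeta)-\zeta)g_R$ term) feeds the same data back through equation $(\mathrm{I})$; one must instead exploit several components of the operator equation simultaneously, or a pointwise type/rank constraint on the image of $\zeta\mapsto\mathcal L_\zeta J_R$, to force that a solution would express the nowhere-vanishing $\eta^3$ through strictly lower-order data, which is impossible once $n>2$. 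The two cases run in parallel, with $Iso(\mathcal M,g_R,\cdot)$, $G_{g_R}$ and $\mathcal V_{\mathcal M,g_0,\bar 0}^{(4)}$ replacing their almost complex counterparts and the second half of Lemma~\ref{t2} supplying the test value; since in neither case is the first-order equation locally solvable at any point, $J$ and $g$ are nowhere split.
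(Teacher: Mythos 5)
Your setup follows the paper's reduction and your preliminary facts are correct: both nilpotent parts are homogeneous of $\mathbb Z$-degree $4$, the question is local, the identity $F_{J_R}(ad(\zeta))=-[ad(\zeta),J_R]\circ J_R$ holds, the test quantity is $\big(\eta Y_\eta(\pi(\xi_{J_M}))\big)(\eta)=\eta^3$, and $n>2$ enters only to make $\eta^3$ nowhere vanishing. But there are two gaps, one secondary and one fatal. The secondary one: your ``defining equation'' --- $J$ split near $p$ iff $\eta Y_\eta=F_{J_R}(ad(\zeta))$ is locally solvable, resp.\ $\eta W_\eta=G_{g_R}(\zeta)$ --- is asserted, not proved. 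A splitting automorphism has the form $\psi^\ast=\exp(\xi)\psi_0^\ast$ with arbitrary $\xi\in\mathcal V_{\mathcal M,\bar 0}^{(2)}$, and one must show that $\xi_2$ cannot contribute in degree $4$; this is the first half of the paper's proof (from $[ad(\xi_2),J_R]=0$ one gets $\exp(\xi_2).J_R=J_R$, so the action on $Y$ is $Y\mapsto F_{J_R}(ad(\xi_4))+\psi_0^\ast.Y$ up to degree $\geq 6$, making $F_{J_R}(ad(\mathcal V_{\mathcal M,\bar 0}^{\{4\}}))$ a $PHol(\mathcal M,J_R,4)$-orbit; the Riemannian case needs in addition $\eta W_\eta\in End^{(4)}_{g_0}(\mathcal V_\mathcal M)_{\bar 0}$ and $\xi_4\in\mathcal V_{\mathcal M,g_0,\bar 0}^{\{4\}}$). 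Note also that your ``iff'' is false in the direction you do not need: solving the degree-$4$ equation only splits $J$ up to degree $6$.

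The fatal gap: the central claim --- that the linear equation has no local solution at any point --- is exactly what you do not prove. You name it ``the main obstacle'' and offer only unspecified strategies (``exploit several components simultaneously, or a pointwise type/rank constraint''), so the proposal stops where the theorem begins; what you actually establish is already contained in Lemma~\ref{t2}. Moreover, the mechanism the paper uses to close this step is the very one your third paragraph rejects as ``tautological'': by Proposition~\ref{prop1} (resp.~\ref{prop2}) the obstruction class of $\eta Y_\eta=F_{J_R}(\eta\pi(\xi_{J_M})\otimes{\rm d}_\mathcal M\eta)$ coincides with that of the representative $\eta F_{J_R}(ad(\eta\pi(\xi_{J_M})))$, and the identity $F_{J_R}(ad(\eta\pi(\xi_{J_M})))=\eta\cdot F_{J_R}(ad(\pi(\xi_{J_M})))-Y_\eta$ from the proof of Proposition~\ref{prop1}, combined with Lemma~\ref{tech}~b), c) (which give $F_{J_R}(ad(\pi(\xi_{J_M})))(\pi(\xi_{J_M}))=0$), reduces everything to the evaluation $Y_\eta(\pi(\xi_{J_M}))(\eta)=\eta^2$ of Lemma~\ref{t2}; the Riemannian half runs in parallel with $G_{g_R}$ and $\mathcal V_{\mathcal M,g_0,\bar 0}^{(4)}$. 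Your instinct that a bare evaluation cannot by itself separate $\eta Y_\eta$ from the subspace $F_{J_R}(ad(\mathcal V_{\mathcal M,4}))$ does have substance --- for instance $F_{J_R}(ad(\eta^2\pi(\xi_{J_M})))=\eta^2F_{J_R}(ad(\pi(\xi_{J_M})))-2\eta Y_\eta$ evaluates to $-2\eta^3$ under the same functional --- but the paper's answer to this is precisely the quotient machinery of Propositions~\ref{prop1}/\ref{prop2}, which controls how representatives of the trivial classes look, not a new pointwise invariant. Since you discard the former and do not construct the latter, no proof of the non-solvability, and hence of the theorem, results.
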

\begin{proof}
For $\psi^\ast=\exp(\xi)\psi_0^\ast \in Hol(\mathcal M,J_R,4)$ we obtain $[ad(\xi_2),J_R]=0$. With the identity $\exp(\xi_{2}).J_R=\exp(ad(\xi_{2}))J_R \exp(ad(\xi_{2}))=\exp([ad(\xi_{2}),\cdot])(J_R)=J_R$ it follows that $\psi^\ast$ maps  $Y\in F_{J_R} ( End^{\{4\}}(\mathcal V_{\mathcal M})_{\bar 0})$ to   $F_{J_R}(ad(\xi_4))+\psi^\ast_0.Y$ up to terms of degree $\geq 6$. Hence $F_{J_R} (ad(\mathcal V_{\mathcal M,\bar 0}^{\{4\}}))$ is a $PHol(\mathcal M,J_R,4)$-orbit up to terms of degree $\geq 6$. Using Proposition \ref{prop1} and $\eta Y_\eta=F_{J_R}(\eta \pi(\xi_{J_M})\otimes {\rm d}_\mathcal M \eta)$ it is sufficient to check that $\eta F_{J_R}(ad(\eta \pi(\xi_{J_M})))$ does not vanish in degree $4$. From the proof of Proposition \ref{prop1} we know the identity $F_{J_R}(ad(\eta\pi(\xi_{J_M})))=\eta\cdot  F_{J_R}(ad(\pi(\xi_{J_M})))-Y_\eta$. Now Lemma \ref{tech} b) and c) yields that $ F_{J_R}(ad(\pi(\xi_{J_M})))(\pi(\xi_{J_M}))=0$. Hence it is  $F_{J_R}(ad(\eta\pi(\xi_{J_M})))(\pi(\xi_{J_M}))(\eta)=-Y_\eta(\pi(\xi_{J_M}))(\eta)$ which is $\eta^2$ as it was shown in the proof of Lemma \ref{t2}. This proves the first statement.

\smallskip\noindent
It is by direct calculation $\eta W_\eta \in End^{(4)}_{g_0}(\mathcal V_\mathcal M)_{\bar 0}$. 
 Let $\psi^\ast=\exp(\xi)\psi_0^\ast \in Iso(\mathcal M,g_R,6)$, then the degree $4$ term of $\psi^\ast.g$ vanishes while the degree $6$ term is $\psi_0^\ast.(g_2W_4+g_0W_6)+(\psi^\ast.g_R)_6$. Further $(1+\xi_2)\psi_0^\ast$ preserves $g_R$ and so does $\exp(\xi_2)\psi_0^\ast$. The term $(\exp(\xi_4+\xi_6).g_R)_6$ equals $(g_R(ad(\xi_4+\xi_6)\otimes Id + Id \otimes ad(\xi_4+\xi_6))-(\xi_4+\xi_6) g_R)_6$. So $\psi^\ast$ maps $W \in End^{(4)}_{g_0}(\mathcal V_\mathcal M)_{\bar 0}$ to $G_{g_R}(\xi_4)+\psi_0^\ast(W_4)$ up to terms of degree $\geq 6$. Since $\psi^\ast$ preserves the vanishing degree four term of $g_R$, it follows that $\xi_4 \in \mathcal V_{\mathcal M,g_0,\bar 0}^{\{4\}}$.  Hence $G_{g_R} (\mathcal V_{\mathcal M,g_0,\bar 0}^{\{4\}})$ is a $PIso(\mathcal M,g_R,6)$-orbit up to terms of degree $\geq 6$.
 Analogue to the almost complex case and following Proposition \ref{prop2} it is sufficient to show that $\eta G_{g_R}(\eta \pi(\xi_{J_M}))$ is nowhere vanishing. We have the identity $G_{g_R}(\eta \pi(\xi_{J_M}))=\eta\cdot  G_{g_R}(\pi(\xi_{J_M}))-W_\eta$. Note that for $\alpha \in \mathcal V_\mathcal M^\ast$ it is $(g_R^{-1}(\alpha))(\eta)=\alpha(g_R^{-1}(d_\mathcal M \eta))$ and $g_R^{-1}(d_\mathcal M\eta)=\pi(\xi_{Id})$ up to terms of degree two and higher. Using these details, Lemma \ref{tech} b) and the definition of $g_R$ one obtains $G_{g_R}(\pi(\xi_{J_M}))(\pi(\xi_{J_M}))(\eta)=-\pi(\xi_{J_M})(\eta (\pi(\xi_{Id}) ,\pi(\xi_{J_M})))$ up to terms of degree four and higher. A direct calculation using the graded Leibniz rule and $J_M$-invariance of $\eta$ shows that $G_{g_R}(\pi(\xi_{J_M}))(\pi(\xi_{J_M}))(\eta)$ vanishes up to terms of degree four and higher. Hence $G_{g_R}(\eta \pi(\xi_{J_M}))(\pi(\xi_{J_M}))(\eta)=-W_\eta (\pi(\xi_{J_M}))(\eta)$ up to terms of degree 6 and higher. In the proof of Lemma \ref{t2} it was shown that the degree 4 term of this expression is $\eta^2$. This proves the second statement.
\end{proof}

\end{document}